\renewcommand{\title}[1]{\vspace{\fill}
\eject\addtolength{\baselineskip}{4pt}
{\bfseries\LARGE #1}\\[3mm]\addtolength{\baselineskip}{-4pt}}
\renewcommand{\author}[3]{\parbox[t]{75mm}
{\begin{center}{\scshape #1}\\[3mm] #2\\
 {\ttfamily #3} \end{center}}}
\newtheorem{thm}{\bfseries Theorem}
\newtheorem{lem}[thm]{\bfseries Lemma}        %% lemmas, props, cor, etc
\newtheorem{cl}[thm]{\bfseries Claim}
\newtheorem{conj}[thm]{\bfseries Conjecture}
\newtheorem{con}[thm]{\bfseries Construction}
\newcommand\lref[1]{Lemma~\ref{lem:#1}}
\newcommand\tref[1]{Theorem~\ref{thm:#1}}
\newcommand\cref[1]{Corollary~\ref{cor:#1}}
\newcommand\clref[1]{Claim~\ref{clm:#1}}
\newcommand\cnref[1]{Construction~\ref{con:#1}}
\newcommand\cjref[1]{Conjecture~\ref{conj:#1}}
\newcommand\sref[1]{Section~\ref{sec:#1}}
\begin{document}

\newcommand\cA{{\mathcal A}}
\newcommand\cB{{\mathcal B}}
\newcommand\cC{{\mathcal C}}
\newcommand\cD{{\mathcal D}}
\newcommand\cE{{\mathcal E}}
\newcommand\cF{{\mathcal F}}
\newcommand\cG{{\mathcal G}}
\newcommand\cH{{\mathcal H}}
\newcommand\cI{{\mathcal I}}
\newcommand\cJ{{\mathcal J}}
\newcommand\cK{{\mathcal K}}
\newcommand\cL{{\mathcal L}}
\newcommand\cM{{\mathcal M}}
\newcommand\cN{{\mathcal N}}
\newcommand\cP{{\mathcal P}}
\newcommand\cQ{{\mathcal Q}}
\newcommand\cR{{\mathcal R}}
\newcommand\cS{{\mathcal S}}
\newcommand\cT{{\mathcal T}}
\newcommand\cU{{\mathcal U}}
\newcommand\cV{{\mathcal V}}
\newcommand\PP{{\mathbb P}}
\newcommand\coF{{\overline F}}
\newcommand\coG{{\overline G}}
\newcommand\De{\Delta}

\begin{center}

%%%%%%%%%%%%%%%%%%%%%%%%%%%%%%%%%%%%%%%%%%%%%%%%%%%%%%%%
% Title
%%%%%%%%%%%%%%%%%%%%%%%%%%%%%%%%%%%%%%%%%%%%%%%%%%%%%%%%
\title{Saturating Sperner families} 
%%%%%%%%%%%%%%%%%%%%%%%%%%%%%%%%%%%%%%%%%%%%%%%%%%%%%%%%
% begin : Authors
%%%%%%%%%%%%%%%%%%%%%%%%%%%%%%%%%%%%%%%%%%%%%%%%%%%%%%%%
\author{D\'aniel Gerbner 
%%%%%%%%%%%%%%%%%%%%%%%%%%%%%%%%%%%%%%%%%%%%%%%%%%%%%%%
%%Footnote optional for thanks
%%%%%%%%%%%%%%%%%%%%%%%%%%%%%%%%%%%%%%%%%%%%%%%%%%%%%%
\footnotemark[5]}{
Hungarian Academy of Sciences, Alfr\'ed R\'enyi Institute\\
Mathematics, P.O.B. 127, Budapest H-1364, Hungary
}{
gerbner@renyi.hu
}\footnotetext[5]{Research supported by
    Hungarian National Scientific Fund, grant number: OTKA NK-78439}
%%%%%%%%%%%%%%%%%%%%%%%%%%%%%%%%%%%%%%%%%%%%%%%%%%%%%
% Add the text of ``thanks'' above
%%%%%%%%%%%%%%%%%%%%%%%%%%%%%%%%%%%%%%%%%%%%%%%%%%%%%
\author{
Bal\'azs Keszegh\footnotemark[5]
%%%%%%%%%%%%%%%%%%%%%%%%%%%%%%%%%%%%%%%%%%%%
%% Underline the name of the speaker
%%%%%%%%%%%%%%%%%%%%%%%%%%%%%%%%%%%%%%%%%%%
}{
Hungarian Academy of Sciences, Alfr\'ed R\'enyi Institute\\
Mathematics, P.O.B. 127, Budapest H-1364, Hungary
}{
keszegh@renyi.hu
}
\author{
Nathan Lemons
}{ 
Hungarian Academy of Sciences, Alfr\'ed R\'enyi Institute\\
Mathematics, P.O.B. 127, Budapest H-1364, Hungary
}{
nathan@renyi.hu
}
\author{Cory Palmer\footnotemark[5]}
{Hungarian Academy of Sciences, Alfr\'ed R\'enyi Institute\\
Mathematics, P.O.B. 127, Budapest H-1364, Hungary}
{palmer@renyi.hu}
\author{D\"om\"ot\"or P\'alv\"olgyi\footnotemark[4]}
{Department of Computer Science\\
E\"otv\"os Lor\'and University\\
P\'azm\'any P\'eter s\'et\'any 1/C, Budapest H-1117,
Hungary}
{dom@cs.elte.hu}\footnotetext[4]{The European Union and the European Social Fund have provided financial support to the project under the grant agreement no. TÁMOP 4.2.1./B-09/1/KMR-2010-0003.}
\author{Bal\'azs Patk\'os\footnotemark[1]}
{Hungarian Academy of Sciences, Alfr\'ed R\'enyi Institute\\
Mathematics, P.O.B. 127, Budapest H-1364, Hungary}
{patkos@renyi.hu}\footnotetext[1]{Research supported by
    Hungarian National Scientific Fund, grant numbers: OTKA
    K-69062 and PD-83586}

%%%%%%%%%%%%%%%%%%%%%%%%%%%%%%%%%%%%%%%%%%%%%%%%%%%%%%%%
% end : Authors
%%%%%%%%%%%%%%%%%%%%%%%%%%%%%%%%%%%%%%%%%%%%%%%%%%%%%%%%

\end{center}

%%%%%%%%%%%%%%%%%%%%%%%%%%%%%%%%%%%%%%%%%%%%%%%%%%%%%%%%
% Abstract
%%%%%%%%%%%%%%%%%%%%%%%%%%%%%%%%%%%%%%%%%%%%%%%%%%%%%%%%

\begin{quote}
{\bfseries Abstract:}
A family $\cF \subseteq 2^{[n]}$ saturates the monotone decreasing property $\cP$ 
if $\cF$ satisfies $\cP$ and one cannot add any set to $\cF$ such that property $\cP$ is still satisfied by the resulting family. We address the problem of finding the minimum size of a family saturating the $k$-Sperner property and the minimum size of a family that saturates the Sperner property and that consists only of $l$-sets and $(l+1)$-sets.

\end{quote}

%%%%%%%%%%%%%%%%%%%%%%%%%%%%%%%%%%%%%%%%%%%%%%%%%%%%%%%%
% Keywords (3 $\sim$ 5 words)
%%%%%%%%%%%%%%%%%%%%%%%%%%%%%%%%%%%%%%%%%%%%%%%%%%%%%%%%
\begin{quote}
{\bf Keywords: extremal set theory, Sperner property, saturation   }
\end{quote}
\vspace{5mm}

%%%%%%%%%%%%%%%%%%%%%%%%%%%%%%%%%%%%%%%%%%%%%%%%%%%%%%%%
% Text
%%%%%%%%%%%%%%%%%%%%%%%%%%%%%%%%%%%%%%%%%%%%%%%%%%%%%%%%

\section{Introduction}

One of the most basic  and most studied problems of extremal
combinatorics is that how many edges a (hyper)graph can have if it
possesses some prescribed property $\cP$. If this property $\cP$ is
monotone decreasing (i.e. if $G$ possesses $\cP$, then $F \subseteq
G$ implies $F$ possesses $\cP$), then there exists a ``dual''
problem to the above one: we say that a (hyper)graph $G$
\textit{saturates} property $\cP$ if it possesses property $\cP$ but
adding any (hyper)edge $E$ to $G$ would result in a graph not having
property $\cP$. The problem is to determine the minimum size that
such a saturating (hyper)graph can have. Many researchers have
dealt with this kind of problems both for graphs
\cite{BFP,B1,B2,C1,C2,C3,EHM,FK,KT,P3,PS,Tu,W1,W2} and hypergraphs
\cite{EFT,P1,P2}. To our knowledge all papers so far have considered
Tur\'an-type properties (with the exception of \cite{DPT}):
properties defined through some forbidden (hyper)graphs.

In the present paper we investigate the saturation of Sperner-type
properties. To introduce our main definitions let $k$ and $n$ be
positive integers and let $\cF \subseteq 2^{[n]}$ be a family of
sets such that,
\begin{enumerate}
\item
there do not exist $k+1$ distinct sets $F_1,...,F_{k+1} \in \cF$
that form a \textit{$(k+1)$-chain}, i.e. $F_1 \subset F_2 \subset
... \subset F_{k+1}$ holds,
\item
for every set $S \in 2^{[n]} \setminus \cF$ there exist $k$ distinct
sets $F_1,...,F_k \in \cF$ such that $S$ and the $F_i$'s form a
$(k+1)$-chain.
\end{enumerate}
A family $\cF$ is called \textit{$k$-Sperner} if it satisfies
Property 1, \textit{weakly saturating $k$-Sperner} if it satisfies 2
and \textit{(strongly) saturating $k$-Sperner} if it satisfies both.
The maximum size of a $k$-Sperner family $\cF \subset 2^{[n]}$ was
determined by Sperner \cite{S} in the special case $k=1$ and by 
Erd\H os \cite{Erd} for arbitrary $k$. 

In \sref{bounds} we will derive
bounds on $\text{sat}(n,k)$ ($\text{wsat}(n,k)$) the minimum
number of sets that strongly saturating $k$-Sperner (weakly
saturating $k$-Sperner) family $\cF \subset 2^{[n]}$ can contain. By
definition, we have $\text{wsat}(n,k) \le \text{sat}(n,k)$. The
following \textit{product construction} shows that there is an upper
bound on both of these numbers that is independent of $n$, namely
$\text{wsat}(n,k) \le \text{sat}(n,k) \le 2^{k-1}$. Let $\cF \subset
2^{[n]}$ be defined by
\[
\cF:=2^{[k-2]} \times \left\{\emptyset, [n]\setminus
[k-2]\right\}=2^{[k-2]} \cup \{F \in 2^{[n]}: [n]\setminus[k-2]
\subseteq F\}.
\]
It is easy to see that $\cF$ is indeed strongly saturating $k$-Sperner. 
It is natural to formulate the following conjecture.
\begin{conj}
\label{conj:big} For every positive integer $k$ there exists an 
$n_0=n_0(k)$ such that for any $n \ge n_0$ we have
$\text{sat}(n,k)=2^{k-1}$.
\end{conj}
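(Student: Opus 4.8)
The plan is to establish the matching lower bound $\mathrm{sat}(n,k)\ge 2^{k-1}$ for all sufficiently large $n$; together with the product construction this proves the conjecture. I would argue by induction on $k$, the case $k=1$ being trivial and the case $k=2$ holding because a strongly saturating $2$-Sperner family with at most one set would, after adding any other set, still be a family of at most two sets and hence could not contain a $3$-chain, so it cannot be saturating once $n\ge 1$. Before the inductive step it helps to recast saturation locally: for $S\subseteq[n]$ let $a(S)$ and $b(S)$ be the number of sets in a longest chain of $\cF$ lying strictly below, respectively strictly above, $S$. A longest chain of $\cF$ through a member $F$ then has $a(F)+1+b(F)$ sets, so $k$-Spernerness is equivalent to $a(F)+b(F)\le k-1$ for every $F\in\cF$; and since $\cF$ is $k$-Sperner, adding $S\notin\cF$ creates a $(k{+}1)$-chain precisely when that chain runs through $S$, i.e.\ exactly when $a(S)+b(S)\ge k$. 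Hence $\cF$ is strongly saturating $k$-Sperner if and only if $a(F)+b(F)\le k-1$ for all $F\in\cF$ and $a(S)+b(S)\ge k$ for all $S\notin\cF$.

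The heart of the induction is a peeling step. Given a strongly saturating $k$-Sperner family $\cF\subseteq 2^{[n]}$ with $n$ large, choose a minimal element $A\in\cF$ and set $\cF^{A}:=\{\,F\setminus A:\ F\in\cF,\ F\supsetneq A\,\}\subseteq 2^{[n]\setminus A}$. A $k$-chain of $\cF^{A}$ lifts to a $k$-chain of $\cF$ all of whose members strictly contain $A$, which with $A$ forms a $(k{+}1)$-chain of $\cF$; so $\cF^{A}$ is $(k-1)$-Sperner. The substantive claim is that $\cF^{A}$ is strongly saturating $(k-1)$-Sperner in $2^{[n]\setminus A}$, at least after replacing $A$ by a judiciously chosen minimal element (say one maximising $b$ among the singletons it covers). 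Granting this, the inductive hypothesis gives $|\cF^{A}|\ge 2^{k-2}$, i.e.\ at least $2^{k-2}$ members of $\cF$ strictly contain $A$; dually, at least $2^{k-2}$ members of $\cF$ are strictly contained in a suitably chosen maximal element $B$. If $A\not\subseteq B$ these two subfamilies are disjoint and $|\cF|\ge 2^{k-1}$. In the remaining case — which includes the product construction, where $\emptyset$ and $[n]$ are the unique minimal and maximal elements — the two subfamilies meet only inside the interval $[A,B]$, and one shows that the relativisation of $\cF\cap[A,B]$ to the $|B\setminus A|$-cube is strongly saturating $(k-2)$-Sperner, so the peeling can be iterated inside $[A,B]$ with the contributions telescoping to $2^{k-1}$.

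The step I expect to be the real obstacle is the claim that the peeled family $\cF^{A}$ inherits strong saturation: the $(k{+}1)$-chain witnessing a set $S\supsetneq A$ need not lie above $A$, so one must either choose $A$ cleverly or show, via $k$-Spernerness, that the lower portion of such a witness can be pushed above $A$ without shrinking it too much. Making this rigorous — while simultaneously tracking how large $n$ must stay after each peeling, since $n$ drops by $|A|$ and must remain above the threshold $n_{0}(k-1)$ — is where the difficulty concentrates, and is presumably why the statement is only conjectured. A more hands-on alternative is a double count of the non-members: every $S\notin\cF$ lies in a gap of one of the boundedly many $k$-chains of $\cF$, and a single chain $F_{1}\subsetneq\cdots\subsetneq F_{k}$ witnesses at most $\sum_{i=0}^{k}2^{a_{i}}$ sets, where $a_{0}=|F_{1}|$, $a_{i}=|F_{i+1}|-|F_{i}|$ for $1\le i\le k-1$, $a_{k}=n-|F_{k}|$ and $\sum_{i}a_{i}=n$; the obstruction there is that a chain may have one very large gap (exactly as in the product construction), forcing a recursive descent into that gap whose control is again the crux. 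A sensible first concrete target is to push either scheme through for $k\le 4$.
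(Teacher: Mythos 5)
This statement is \cjref{big} of the paper; it is an open conjecture and the paper does not prove it. The paper's results in its direction are the upper bound $\text{sat}(n,k)\le 2^{k-1}$ from the product construction, the sharper bound $\text{sat}(k,k)\le\frac{15}{16}2^{k-1}$ for $k\ge 6$ (\tref{constr}\textbf{(i)}, which shows $n_0(k)$ must strictly exceed $k$), and the much weaker lower bound $2^{k/2-1}\le\text{sat}(n,k)$ (\tref{lower}\textbf{(i)}). You are rightly candid that the peeling step is unjustified, but the obstruction is more structural than ``pushing a witness chain above $A$''.

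By \lref{triv} a minimum-size strongly saturating $k$-Sperner family may be taken to contain $\emptyset$ and $[n]$, and these are then its unique minimal and maximal elements. So your $A\not\subseteq B$ case --- the only place where the two peeled subfamilies are disjoint and their sizes add to $2\cdot 2^{k-2}$ --- never occurs: you are always at $A=\emptyset$, $B=[n]$. But then the ``relativisation of $\cF\cap[A,B]$ to the $|B\setminus A|$-cube'' is $\cF$ on $2^{[n]}$ itself, so the only thing you can peel is the pair $\{\emptyset,[n]\}$. It is true that $\cF\setminus\{\emptyset,[n]\}$ is strongly saturating $(k-2)$-Sperner on the \emph{same} ground set (a short check: any $(k-1)$-chain in it would extend by $\emptyset$ and $[n]$ to a $(k+1)$-chain in $\cF$, and a witness chain for $S\notin\cF$ yields, after discarding at most two of its members, a witness of the required shorter length, while for $S=\emptyset$ or $S=[n]$ a maximum chain of $\cF$ necessarily runs from $\emptyset$ to $[n]$ and supplies the witness). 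Applying the inductive bound therefore gives $|\cF|\ge 2^{k-3}+2$, and iterating gives only the recursion $f(k)\ge f(k-2)+2$ with $f(1)=1$, $f(2)=2$, hence the linear bound $|\cF|\ge k$ rather than $2^{k-1}$: all the exponential gain in your plan was concentrated in the case that \lref{triv} rules out. The double-count you sketch as an alternative is essentially the paper's proof of \tref{lower} and, for the reason you yourself note (one huge gap in the chain), cannot do better than $2^{k/2-1}$ without a recursive handle on that gap. The paper's own proposal for progress is to understand the primitive families appearing in \lref{bign}; closing the gap between $2^{k/2-1}$ and $2^{k-1}$ remains genuinely open.
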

It is trivial to verify that $n_0(k)=k$ for $k=1,2,3$. By giving
constructions we will prove the following two upper bounds.
\begin{thm}
\label{thm:constr} For integers $6 \le k \le n$ we have the
following inequalities:

\textbf{(i)} sat$(k,k)\le \frac{15}{16}2^{k-1}$,

\textbf{(ii)} wsat$(n,k) =O(\frac{\log k}{k}2^k)$.
\end{thm}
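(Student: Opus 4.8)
The plan is to reduce both bounds to the construction of small ``base'' families on a bounded ground set by a product (composition) principle, and then to build those base families — explicitly, with a finite verification, for \textbf{(i)}, and by the probabilistic method for \textbf{(ii)}. The underlying principle is that a product $\cG_1\times\cG_2:=\{G_1\cup G_2: G_i\in\cG_i\}$ on a disjoint union of ground sets inherits saturation of the two factors with the parameters adding (minus one): a chain in the product projects to a weakly increasing sequence in each coordinate, each coordinate makes at most $k_i-1$ strict increases since $\cG_i$ is $k_i$-Sperner, so chains have at most $(k_1{-}1)+(k_2{-}1)+1$ members; and given $S=S_1\cup S_2$ outside the product one builds a long chain through $S$ by threading a saturating chain through $S_i$ in the factor where $S_i\notin\cG_i$ with a chain of the other factor, doubling at the pivot $S_i$. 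I only need two clean instances, in both of which the threading is immediate because one factor is structureless: $\cG_2=\{\emptyset,\{*\}\}$ gives $\text{sat}(n{+}1,k{+}1)\le 2\,\text{sat}(n,k)$, and $\cG_1=2^{[m]}$ the full cube gives $\text{wsat}(n,k)\le 2^m\,\text{wsat}(n-m,k-m)$.

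For \textbf{(i)}, iterating the first instance reduces everything to one small case: it suffices to find, for a fixed small $m$ (I would try $m=6$, or $m=5$), a family $\cG\subseteq 2^{[m]}$ that strongly saturates $m$-Sperner with at most $\frac{15}{16}2^{m-1}$ sets (i.e.\ $30$ when $m=6$, or $15$ when $m=5$), since then $\text{sat}(k,k)\le 2^{k-m}\,\text{sat}(m,m)\le 2^{k-m}\cdot\frac{15}{16}2^{m-1}=\frac{15}{16}2^{k-1}$ for all $k\ge m$. To produce such a $\cG$ I would start from the product family $2^{[m-2]}\times\{\emptyset,[m]\setminus[m-2]\}$ (which has exactly $2^{m-1}$ sets), delete a cleverly chosen handful of its members, and verify directly that every deleted set still completes to an $(m{+}1)$-chain within the remaining family and that no set of $2^{[m]}$ can be added back. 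This verification is the entire content of \textbf{(i)} and is where the difficulty sits: naive deletions fail (for instance, removing a single low set from the product family destroys the unique long chain through some other vertex), so the base family must be engineered carefully, quite possibly with a computer search over $2^{[5]}$ or $2^{[6]}$.

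For \textbf{(ii)}, the product family only yields the trivial $\text{wsat}(n,k)\le 2^{k-1}$, and the recursion $\text{wsat}(n,k)\le 2^m\,\text{wsat}(n-m,k-m)$ with full cubes never beats it, so a genuinely more economical core is required. The plan is to keep the overall shape $\cF=\cG\times\{\emptyset,X\}$ but with $\cG\subseteq 2^{[m]}$ for $m$ a little larger than $k-2$ (the extra coordinates provide slack in the chain-length bookkeeping) and $\cG$ chosen \emph{at random}: keep each set of $2^{[m]}$ independently with a small probability $p$. Unwinding weak saturation shows what must be checked: for every $A\subseteq[m]$, $\cG$ must contain a long enough chain among the sets contained in $A$ and a long enough chain among the sets containing $A$ — long enough that, after the doubling at $A$ and the jump through the anchor, one can assemble a $(k{+}1)$-chain through any $S$ whose $[m]$-part is $A$. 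For a single $A$ this is a large-deviation statement about longest chains in a $p$-random Boolean lattice, failing with probability small enough — once $p$ is of order $\frac{\log k}{k}$ — to survive a union bound over the at most $2^m\approx 2^k$ choices of $A$; a first-moment/alteration argument then produces an explicit $\cF$ with $|\cF|=O(p\cdot 2^m)=O\!\big(\frac{\log k}{k}2^k\big)$. The main obstacle here is making all the parameters fit at once: the chain-length thresholds must be low enough to be met by a sparse random $\cG$, the slack $m-(k-2)$ (hence the factor $2^{\,m-k}$ paid in the count) must stay bounded, and the union bound must still go through — and it is precisely the union bound over $\approx 2^k$ sets $A$ that is responsible for the $\log k$.
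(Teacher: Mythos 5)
For part \textbf{(i)} your plan is the same as the paper's: the recursion $\mathrm{sat}(n,k)\le 2\,\mathrm{sat}(n-1,k-1)$ is exactly \lref{ind}, and iterating from a base case on $2^{[6]}$ of size $\le 30$ gives $\mathrm{sat}(k,k)\le 2^{k-6}\cdot 30=\frac{15}{16}2^{k-1}$ for $k\ge 6$. But the base family is the whole content, as you say yourself, and you do not produce one; the paper exhibits a concrete $30$-set strongly saturating $6$-Sperner family (built by stacking two strongly saturating $2$-Sperner families and two saturating flat antichains). Without that object (or a correct one on $2^{[5]}$ of size $\le 15$, which is \emph{not} what the paper proves), your argument for (i) is incomplete.

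For part \textbf{(ii)} you take a genuinely different route, and I think it has a real gap, not just missing details. The paper builds $\cF=\bigcup_{l=0}^{k}\cF_l\times\{\emptyset,[n]\setminus[k]\}$ with layer families $\cF_l\subseteq\binom{[k]}{l}$ chosen so that $\Delta(\cF_l)=\binom{[k]}{l-1}$ and $\nabla(\cF_l)=\binom{[k]}{l+1}$; this layered coverage lets one walk a chain \emph{one layer at a time} through any $G\cap[k]$, deterministically. The $\log k/k$ factor is the known density of covering codes on the middle layers (with the extreme layers $l<k/4$, $l>3k/4$ taken in full, contributing $o(\frac{\log k}{k}2^k)$), not a union-bound artifact. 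Your scheme replaces the layered $\cG$ by a uniformly $p$-random subset of $2^{[m]}$ with $p=\Theta(\frac{\log k}{k})$ and $m=k+O(1)$, and then asserts that every $A$ lies on a chain of $\cG$ of length $m-O(1)$ with failure probability $\ll 2^{-m}$. This is false. In a $p$-random subset of $2^{[m]}$ the greedy extension from a set of size $i$ succeeds with probability $1-(1-p)^{m-i}$, so the expected number of levels at which one is stuck is about $\sum_i(1-p)^{m-i}\approx 1/p = \Theta(k/\log k)$; more to the point, once $m-i<1/p$ the branching factor $(m-i)p$ drops below $1$ and the exploration goes subcritical, so the longest chain typically has length $m\bigl(1-\Theta(\tfrac{1}{\log m})\bigr)$, not $m-O(1)$. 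To recover a $(k+1)$-chain you would therefore need slack $m-k=\Omega(k/\log k)$, which multiplies your count by $2^{\Omega(k/\log k)}$ and destroys the bound. An alteration step does not rescue this either, since a constant fraction of the vertices $A$ near the top and bottom of the cube are ``bad'' simultaneously. In short, the random model at this density simply does not produce nearly full chains; the paper avoids the issue entirely by enforcing the shadow/shade coverage layer by layer, which is a strictly stronger (and easier to certify) property than ``long chain exists''.
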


We will also obtain lower bounds on the size of saturating $k$-Sperner
 families. All our lower bounds will apply both for
$\text{wsat}(n,k)$ and $\text{sat}(n,k)$.

\begin{thm}
\label{thm:lower} For integers $k,c$ and $n$ we have the following
inequalities:

\textbf{(i)} $2^{k/2-1} \le \text{wsat}(n,k) \le \text{sat}(n,k)$
provided $k \le n$,

\textbf{(ii)} $\frac{2^{k+c}}{(k+c)^{c+1}} \le \text{wsat}(k+c,k)
\le \text{sat}(k+c,k)$ provided $2 \le k$ and $0 \le c$.
\end{thm}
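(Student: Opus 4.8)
The plan is to first reformulate weak $k$-Sperner saturation in terms of chain lengths, and then attack the two parts by different counting schemes.

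\smallskip
\noindent\textbf{Reformulation.} For $S\in 2^{[n]}$ let $\beta(S)$ be the largest number of sets in a chain of $\cF$ all of which are subsets of $S$, and $\alpha(S)$ the largest number of sets in a chain of $\cF$ all of which are supersets of $S$. Then $\cF$ weakly saturates $k$-Sperner exactly when $\alpha(S)+\beta(S)\ge k$ for every $S\notin\cF$. Two consequences I would record first: (a) for $S\notin\cF$, concatenating a subset-chain of length $\beta(S)$ below $S$ with a superset-chain of length $\alpha(S)$ above $S$ produces a chain of $\alpha(S)+\beta(S)\ge k$ sets inside $\cF$, so, unless $\cF=2^{[n]}$ (where both bounds are trivial since $n\ge k$), $\cF$ contains a chain with at least $k$ sets; (b) the witness of a bad $S$ of size $i$ is a $(k+1)$-chain of $2^{[n]}$, so $S$ together with the $k$ witnessing sets occupy $k+1$ of the $n+1$ ranks. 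Both parts boil down to bounding how many sets a family of a given size can ``envelop'' in the sense of (a).

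\smallskip
\noindent\textbf{Part (ii).} Here $n-k=c$ is small. In any $(k+1)$-chain of $2^{[k+c]}$ the $k$ rank-increments are each $\ge 1$ and sum to at most $k+c$, so each is at most $c+1$. Hence for a bad $S$ with $c<|S|<n-c$, every witnessing $(k+1)$-chain contains a member $A\subsetneq S$ with $|S\setminus A|\le c+1$ and a member $B\supsetneq S$ with $|B\setminus S|\le c+1$; that is, writing $i=|S|$, the set $S$ lies in the $(\le c{+}1)$-step up-shadow of $\cF_{i-1}\cup\cdots\cup\cF_{i-(c+1)}$ and also in the $(\le c{+}1)$-step down-shadow of $\cF_{i+1}\cup\cdots\cup\cF_{i+(c+1)}$, while the extreme ranks $|S|\le c$ or $|S|\ge n-c$ contribute only $O(n^{c+1})$ sets in all. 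Since a set of size $i-d$ has $\binom{n-i+d}{d}$ supersets of size $i$, the level-$i$ up-shadow has at most $\sum_{d=1}^{c+1}\binom{n-i+d}{d}|\cF_{i-d}|$ sets and the level-$i$ down-shadow at most $\sum_{d=1}^{c+1}\binom{i+d}{d}|\cF_{i+d}|$ sets; I would bound the number of bad sets at level $i$ by the geometric mean of these two quantities and sum over $i$ using Cauchy--Schwarz together with $\sum_i\binom{n-i+d}{d}|\cF_{i-d}|=\sum_j\binom{n-j}{d}|\cF_j|\le\binom nd|\cF|$. This yields $2^{n}-|\cF|-O(n^{c+1})\le n^{c+1}|\cF|$, hence $|\cF|\ge 2^{k+c}/(k+c)^{c+1}$. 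The routine but delicate part is squeezing the constants in the shadow estimates and in the Cauchy--Schwarz step (for $c=0$ one should instead use the sharper $|\partial^{+}\cF_{i-1}|\le (n-i+1)|\cF_{i-1}|$ and $|\partial^{-}\cF_{i+1}|\le (i+1)|\cF_{i+1}|$ for the one-step shadows) so as to land exactly on the stated bound, plus the bookkeeping for the extreme ranks.

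\smallskip
\noindent\textbf{Part (i).} When $n\gg k$ the witnessing chains are spread over many ranks and the window argument of part (ii) collapses, so an $n$-insensitive argument is needed. I would try induction on $k$, dropping it by $2$ at each step. Given $\cF$ weakly saturating $k$-Sperner on $[n]$, fix an element $x$ and split $\cF$ into $\cF_0=\{F\in\cF:x\notin F\}$ and $\cF_1=\{F\setminus\{x\}:x\in F\in\cF\}$ on ground set $[n]\setminus\{x\}$; splitting a witness for $S\subseteq[n]\setminus\{x\}$ according to which of its members contain $x$ shows that, for $S\notin\cF_0$, the $\cF_0$-chain enveloping $S$ together with the longest $\cF_1$-chain above $S$ has total length at least $k$. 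The aim is to pick $x$ so that $\cF_0$ --- the smaller of the two parts, so $|\cF_0|\le|\cF|/2$ --- is again weakly saturating, now $(k-2)$-Sperner; that would give $\text{wsat}(n,k)\ge 2\,\text{wsat}(n-1,k-2)$ and, on iterating down to $k\le 2$ (where $\text{wsat}\ge 1$ trivially), the bound $2^{k/2-1}$. The hard part --- and the step I expect to be the real obstacle --- is exactly this choice of $x$: one must balance ``$x$ lies in at least half the members of $\cF$'' against ``the members of $\cF$ through $x$ can contribute at most a length-$2$ chain above any bad $S$'', and the obvious choices already fail on the product construction, so either a genuinely clever element selection or a more flexible amortized reduction (only asserting $\text{wsat}(n,k)\ge c'\cdot\text{wsat}(n-O(1),k-O(1))$ with $c'>1$ and tracking the exponents) seems to be required.
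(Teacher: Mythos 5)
Your plan for part (ii) is in the right spirit --- bound how many bad sets the intervals $I_{F_i,F_{i+1}}$ around a given $F_i$ can cover --- but it is over-engineered. The paper simply groups the witnessing intervals by their lower endpoint $F_i$ and observes that (since $n=k+c$ forces each rank-increment in the $(k+1)$-chain to be $\le c+1$) any covered $S$ satisfies $F_i\subseteq S$ with $|S\setminus F_i|\le c+1$; hence one $F_i$ accounts for at most $\sum_{j\le c+1}\binom{k+c}{j}\le(k+c)^{c+1}$ sets, and $|\cF|(k+c)^{c+1}\ge 2^{k+c}$ follows at once. Your geometric-mean/Cauchy--Schwarz pass over both shadows is unnecessary (using only the lower endpoint already suffices), and while it might shave a $c!$ off the constant, you yourself flag that the bookkeeping at the extreme ranks and in squeezing the constants is unresolved.

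For part (i) there is a genuine gap: your induction-on-$k$ reduction is, by your own account, incomplete, and the element-selection step really is an obstruction --- you have no proof. The idea you are missing is that the interval argument of (ii) already works for general $n$ without any induction, provided you index the intervals by \emph{pairs} from $\cF$ rather than by single sets. For a bad $G$ with witness $F_1\subset\cdots\subset F_i\subset G\subset F_{i+1}\subset\cdots\subset F_k$, the remaining $k-2$ chain members force $|F_{i+1}\setminus F_i|\le n-k+2$, so $G$ lies in an interval $I_{F_i,F_{i+1}}$ of size at most $2^{n-k+2}$. There are at most $\binom{|\cF|}{2}\le|\cF|^2/2$ such intervals, and together (with $\cF$ itself) they cover $2^{[n]}$; hence $\tfrac{|\cF|^2}{2}\,2^{n-k+2}\ge 2^n$, i.e.\ $|\cF|\ge 2^{(k-1)/2}\ge 2^{k/2-1}$. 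The $n$-dependence cancels exactly because the bound on the interval size and the size of the ambient cube both carry a $2^n$ factor. Your intuition that ``an $n$-insensitive argument is needed'' was pointing you away from the (correct) covering-by-intervals argument rather than towards it.
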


1-Sperner families are also called \textit{antichains}. Saturating
antichains with the simplest structure are the families consisting
of all $l$-element subsets of the underlying set for any fixed $l$.
Next, one would consider antichains with two possible set sizes. If
the set sizes are consecutive integers, then these families are
called \textit{flat antichains}. Gr\"utm\"uller, Hartmann,
Kalinowski, Leck and Roberts \cite{GHKLR} proved the following
theorem.

\begin{thm} [Gr\"utm\"uller, Hartmann, Kalinowski, Leck, Roberts \cite{GHKLR}]
\label{thm:german} If $\cF \subseteq \binom{[n]}{2} \cup
\binom{[n]}{3}$ is a saturating antichain, then the following holds
\[
|\cF| \ge \binom{n}{2}-\left\lceil\frac{(n+1)^2}{8}\right\rceil.
\]
\end{thm}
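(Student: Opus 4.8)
The plan is to translate the statement into a purely graph-theoretic inequality and then attack that by induction on $n$. Write $\cF=\cA\cup\cB$ with $\cA\subseteq\binom{[n]}{2}$ and $\cB\subseteq\binom{[n]}{3}$, and let $G$ be the graph on vertex set $[n]$ whose edges are exactly the pairs \emph{not} lying in $\cF$, i.e.\ $E(G)=\binom{[n]}{2}\setminus\cA$. First I would unwind ``saturating antichain'' against each possible set size. Being an antichain means no pair of $\cA$ is contained in a triple of $\cB$, so every triple of $\cB$ has all three of its pairs in $E(G)$, i.e.\ is a triangle of $G$; the impossibility of adding a triple to $\cF$ forces, conversely, every triangle of $G$ to lie in $\cB$; hence $\cB$ is precisely the set of triangles of $G$. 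The impossibility of adding a pair forces every edge of $G$ to lie in a triangle of $G$. Finally the requirements for sets of size $0$, $1$ and $\ge 4$ are automatically satisfied: a set of size $\ge 4$ either contains a non-edge of $G$ (a member of $\cA$) or is a clique and hence contains a triangle of $G$ (a member of $\cB$), and the small cases are similar. Conversely every graph $G$ on $[n]$ in which each edge lies in a triangle arises this way from a saturating antichain. Since $|\cF|=|\cA|+|\cB|=\bigl(\binom n2-e(G)\bigr)+t(G)$, where $e(G),t(G)$ count edges and triangles, Theorem~\ref{thm:german} is equivalent to the assertion that $e(G)-t(G)\le\bigl\lceil\tfrac{(n+1)^2}{8}\bigr\rceil$ for every graph $G$ on $n$ vertices in which every edge lies in a triangle.

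For the inductive step, pick a vertex $v$ of minimum degree and delete it, together with every edge of $G[N(v)]$ that lies in exactly one triangle of $G$; call the result $G'$. One checks that $G'$ still has the property that every edge lies in a triangle: if $ab$ survives in $G'$ it lies in some triangle $abc$ of $G$ with $c\ne v$, and neither $ac$ nor $bc$ can have been deleted, because each of them already lies in the triangle $abc$ (distinct from the potential triangles $\{a,c,v\}$, $\{b,c,v\}$) and hence lies in at least two triangles of $G$. Counting carefully, one obtains the identity
\[
e(G)-t(G)=\bigl(e(G')-t(G')\bigr)+d(v)-m_v,
\]
where $m_v$ is the number of edges of $G[N(v)]$ lying in at least two triangles of $G$ (the edges deleted inside $N(v)$ and the triangles through $v$ cancel in a controlled way). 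Feeding in the inductive bound $e(G')-t(G')\le\lceil n^2/8\rceil$, it suffices to find a vertex $v$ with $d(v)-m_v\le\bigl\lceil\tfrac{(n+1)^2}{8}\bigr\rceil-\bigl\lceil\tfrac{n^2}{8}\bigr\rceil$, a quantity which is roughly $n/4$.

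To find such a $v$, double-count. Writing $c(uv)$ for the number of triangles on an edge $uv$ and $e_1$ for the number of edges lying in exactly one triangle, one gets $\sum_v m_v=\sum_{uv\colon c(uv)\ge 2}c(uv)=3t(G)-e_1$ and hence $\sum_v\bigl(d(v)-m_v\bigr)=2e(G)-3t(G)+e_1\le 2e_1$, using $3t(G)=\sum_{uv}c(uv)\ge 2e(G)-e_1$. So some vertex has $d(v)-m_v\le 2e_1/n$, and the induction closes as soon as $e_1$ is not much larger than $n^2/8$. It therefore remains to handle graphs with many uniquely-triangled edges, and the very dense graphs. The latter are easy: if $\delta(G)\ge\tfrac{2n}{3}$ then every edge lies in at least $2\delta(G)-n=\Omega(n)$ triangles, so $t(G)=\Omega(n^3)$ and $e(G)-t(G)<0$. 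For the former: if a positive fraction of the edges lie in a unique triangle, then those triangles are forced to be ``spread out'' — three of them pairwise sharing a vertex would, as in the computation above, force some edge into a second triangle — so a Ruzsa--Szemer\'edi / $(6,3)$-type argument, or (to obtain the explicit constant $\tfrac18$) a direct neighbourhood count, bounds $e(G)$ itself, whence $e(G)-t(G)\le e(G)$ is already small enough.

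The step I expect to be the main obstacle is exactly the intermediate regime: $\delta(G)$ of order between $\tfrac n4$ and $\tfrac{2n}{3}$ and $e(G)$ near $\tfrac{n^2}{4}$, with a moderate but not small number of uniquely-triangled edges. There the trivial bound $e(G)-t(G)\le e(G)$ is too weak, and so is a Goodman-type lower bound for $t(G)$ in terms of $e(G)$, because Goodman's inequality makes no use of the covering hypothesis that \emph{every} edge lies in a triangle. Pushing the induction through in this range — choosing $v$ cleverly so that $d(v)-m_v$ is genuinely small, and tracking the floors and ceilings so the constant comes out to be exactly $\tfrac18$ with the stated rounding — is where the real work lies, and is presumably also what determines for which $n$ the bound is tight.
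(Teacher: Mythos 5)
The paper does not actually prove this theorem: it is attributed to~\cite{GHKLR}, and what the authors establish themselves in Section~\ref{sec:flat} is only the asymptotic stability statement Theorem~\ref{thm:23}, by a different route (Ruzsa--Szemer\'edi removal lemma plus Tur\'an's theorem). Your reformulation --- $G$ the graph of $\cF$-non-edges, $\cF_3$ the set of triangles of $G$, every edge of $G$ in a triangle, and the goal $e(G)-t(G)\le\lceil(n+1)^2/8\rceil$ --- is correct and agrees with the $l=2$ case of Lemma~\ref{lem:shadow}. Your deletion identity $e(G)-t(G)=(e(G')-t(G'))+d(v)-m_v$, the verification that $G'$ retains the covering property, and the double-count $\sum_v(d(v)-m_v)\le 2e_1$ all check out.

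The gap you flag, however, is fatal rather than technical. Your fallback plan for large $e_1$ --- bound $e(G)$ itself by roughly $n^2/8$ and conclude from $e(G)-t(G)\le e(G)$ --- fails on the extremal family: taking $G=K_{A\cup B,C}$ together with a matching $M$ between $A$ and $B$ with $|A|=|B|=n/4$ (the $l=2$ case of Construction~\ref{con:gen}), every edge meeting $C$ lies in exactly one triangle, so $e_1=(\tfrac14-o(1))n^2$, yet $e(G)=(\tfrac14+o(1))n^2$, about twice the target $\lceil(n+1)^2/8\rceil$. Thus no Ruzsa--Szemer\'edi or $(6,3)$-type count can show $e(G)$ is small here, and the bound $e(G)-t(G)\le e(G)$ loses a factor of $2$ exactly on the family you must match. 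In that same example a vertex $c\in C$ does satisfy $d(c)-m_c=n/4$, so the deletion idea is not hopeless, but your averaging bound $2e_1/n$ evaluates to $n/2$ there and cannot locate such a vertex --- a much sharper selection rule is required. Until that is supplied, along with the floor/ceiling bookkeeping you defer (the increment $\lceil(n+1)^2/8\rceil-\lceil n^2/8\rceil$ oscillates and is as small as $1$ for small $n$), the theorem is not established.
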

The authors of \cite{GHKLR} also determined all antichains for which
equality holds. In \sref{flat} we obtain the stability version of
\tref{german}. Our proof is self-contained and is much shorter than
their proof of \tref{german}. Some of its parts generalize to saturating
flat antichains with larger set sizes. Unfortunately the lower
bounds that we obtain depend on the Tur\'an density of $K^l_{l+1}$,
the complete $l$-graph on $l+1$ vertices. We will also generalize
the construction of \cite{GHKLR}, but the lower bounds and the size
of the construction are quite far apart even if we assume that some
famous longstanding conjectures about the above-mentioned Tur\'an
densities are true. To state our stability result for a family $\cF
\subset \binom{[n]}{l}$ let us write $\Delta(\cF)=\{G \in
\binom{[n]}{l-1}: \exists F \in \cF \text{such that}\ G \subset F\}$
and $\nabla(\cF)=\{G \in \binom{[n]}{l+1}: \exists F \in \cF
\hskip 0.2truecm \text{such that}\ F \subset G\}$.

\begin{thm}
\label{thm:23} Let $\cF \subseteq \binom{[n]}{2} \cup
\binom{[n]}{3}$ be a saturating antichain of minimum size. Then the
following holds:
\[
|\cF| = \left(\frac{3}{8}-o(1)\right)n^2.
\]
Moreover, if $|\cF|=(\frac{3}{8}+o(1))n^2$, then there is a
partition $[n]=A \cup B \cup C$ with $|A|=|B|=\lfloor n/4 \rfloor$
and a matching $M$ between $A$ and $B$ such that if $\cG=\cG_2 \cup
\cG_3$ is defined by $\cG_3=\{G \in \binom{[n]}{3}: G \cap C \ne
\emptyset \hskip 0.2truecm \text{and}\ \exists m \in M \text{with}\ m \subset G\}$
and $G_2=\binom{[n]}{2} \setminus \Delta(G_3)$, then $|\cG
\vartriangle \cF|=o(n^2)$ holds and $\cG$ is a saturating
antichain.
\end{thm}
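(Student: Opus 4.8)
The plan is to translate the problem about set families into one about graphs, prove a short extremal inequality, and then read the structure off from the equality case.

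First I would set up the reduction. Writing $\cF=\cF_2\cup\cF_3$ with $\cF_i=\cF\cap\binom{[n]}{i}$, let $H$ be the graph on $[n]$ whose edge set is $\binom{[n]}{2}\setminus\cF_2$, i.e.\ the complement of the ``$2$-part''. The antichain and saturation conditions then pin $\cF$ down in terms of $H$: every member of $\cF_3$ contains no edge of $\cF_2$, hence is an independent triple of the complement of $H$, i.e.\ a triangle of $H$; conversely saturation at $3$-sets forces every triangle of $H$ into $\cF_3$; and saturation at $2$-sets together with the antichain property gives $\cF_2=\binom{[n]}{2}\setminus\Delta(\cF_3)$. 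Thus $\Delta(\cF_3)=E(H)$ — equivalently, \emph{every edge of $H$ lies in a triangle of $H$} — and
\[
|\cF|=\Bigl(\tbinom{n}{2}-e(H)\Bigr)+t(H)=\tbinom{n}{2}-\bigl(e(H)-t(H)\bigr),
\]
where $t(H)$ is the number of triangles. So a minimum saturating antichain is the same as a graph $H$ in which every edge lies in a triangle and $e(H)-t(H)$ is maximal, and the family $\cG$ of the statement corresponds to $H_0=K_{A\cup B,\,C}$ with $M$ added (as edges inside $A\cup B$); a direct count gives $e(H_0)-t(H_0)=\tfrac{n^2}{8}+O(n)$, so $|\cF|\le(\tfrac38+o(1))n^2$, and one checks directly that $\cG$ is a saturating antichain.

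Next I would prove the clean inequality $e(H)-t(H)\le\tfrac{n^2}{8}+O(n)$ for every $H$ in which every edge lies in a triangle (this re-derives \tref{german} asymptotically). Pick the vertex $v$ maximizing the cut induced by its neighbourhood, $c:=e_H\bigl(N(v),V\setminus N(v)\bigr)=\sum_{u\sim v}\deg_H(u)-2t(v)$, where $t(v)=e_H(N(v))$. Three elementary facts combine: (i) every edge of this cut lies in a triangle, and a triangle meeting the cut contains exactly two of its edges, so $t(H)\ge c/2$; (ii) averaging $c(\cdot)$ over all vertices and using $\sum_v\sum_{u\sim v}\deg_H(u)=\sum_u\deg_H(u)^2$, $\sum_v t(v)=3t(H)$ and Cauchy--Schwarz gives $c\ge\tfrac1n\sum_u\deg_H(u)^2-\tfrac{6t(H)}{n}\ge\tfrac{4e(H)^2}{n^2}-\tfrac{6t(H)}{n}$; (iii) $\tfrac{4e^2}{n^2}\ge 2e-\tfrac{n^2}{4}$, since this rearranges to $\bigl(\tfrac{2e}{n}-\tfrac n2\bigr)^2\ge0$. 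Chaining (i)--(iii) yields $2t(H)\ge 2e(H)-\tfrac{n^2}{4}-\tfrac{6t(H)}{n}$, i.e.\ $e(H)-t(H)\le\tfrac{n^2}{8}+\tfrac{3t(H)}{n}$; if $t(H)\ge e(H)$ the left side is nonpositive, and otherwise $t(H)<e(H)\le\binom n2$ makes the error term $O(n)$. Together with the construction this gives $|\cF|=(\tfrac38-o(1))n^2$.

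For stability, assume $|\cF|=(\tfrac38+o(1))n^2$, i.e.\ $e(H)-t(H)=\tfrac{n^2}{8}-o(n^2)$; then every inequality above is tight up to $o(n^2)$. From (iii) and the fact that $e\mapsto e-\tfrac{2e^2}{n^2}$ peaks at $e=\tfrac{n^2}{4}$ we get $e(H)=\tfrac{n^2}{4}\pm o(n^2)$; near-equality in Cauchy--Schwarz then forces $\deg_H(u)=\tfrac n2\pm o(n)$ for all but $o(n)$ vertices, and $c=\tfrac{n^2}{4}\pm o(n^2)$ forces $|N(v)|=\tfrac n2\pm o(n)$ and that the bipartite graph between $X:=N(v)$ and $Y:=V\setminus N(v)$ misses only $o(n^2)$ edges, while $\sum_{u\in X}\deg_H(u)=e_H(X,Y)+2e_H(X)$ gives $e_H(X),e_H(Y)=o(n^2)$. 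Now I would use ``every edge lies in a triangle'': if $X_0,Y_0$ are the vertices of $X$, resp.\ $Y$, not covered by within-part edges, then $H$ has no edge between $X_0$ and $Y_0$ (such an edge could lie in no triangle), and by near-completeness of $H[X,Y]$ this forces $\min(|X_0|,|Y_0|)=o(n)$; say $|X_0|=o(n)$, so $e_H(X)\ge\tfrac n4-o(n)$. Since each within-part edge spans $\sim n/2$ distinct triangles with the opposite part, $t(H)=\tfrac{n^2}{8}+o(n^2)$ pins $e_H(X)=\tfrac n4+o(n)$ and $e_H(Y)=o(n)$, so the within-$X$ graph is within $o(n)$ edits of a perfect matching on $X$. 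Taking $A,B$ to be its two sides (discarding the $o(n)$ unmatched vertices) and $C$ the rest, and translating back through the reduction, produces a saturating antichain $\cG$ of exactly the stated shape with $|\cG\vartriangle\cF|=o(n^2)$.

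The hard part will be the last paragraph: carrying the $o(n^2)$-slack cleanly through the triangle count for within-part edges, where a few high-degree or poorly-connected vertices could a priori contribute $o(n)\cdot n=o(n^2)$ stray edges. Controlling these — e.g.\ by observing that a vertex lying in many within-part edges lies in a correspondingly large number of triangles, so only boundedly many such vertices exist for each degree threshold — is what makes the estimate go through. By contrast, the extremal inequality of the middle paragraph is genuinely short and elementary, which is exactly why this route is much shorter than the original proof of \tref{german}.
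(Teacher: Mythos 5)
Your reduction to graphs (``every edge of $H$ lies in a triangle,'' minimize $\binom n2-e(H)+t(H)$) is exactly the paper's \lref{shadow}, but from there you take a genuinely different route. The paper proves the lower bound by invoking the Ruzsa--Szemer\'edi triangle removal lemma together with Tur\'an's theorem, and then gets the bipartition from Erd\H os--Simonovits stability. Your middle paragraph replaces all of this with a single elementary inequality: fixing $v$ maximizing the cut $c(v)=e_H(N(v),\overline{N(v)})$, the chain $2t(H)\ge c(v)\ge\frac1n\sum_u\deg(u)^2-\frac{6t(H)}n\ge\frac{4e(H)^2}{n^2}-\frac{6t(H)}n\ge 2e(H)-\frac{n^2}4-\frac{6t(H)}n$ is correct in every step (I checked the triangle/cut double count, the identity $c(v)=\sum_{u\sim v}\deg(u)-2t(v)$, the averaging, and Cauchy--Schwarz), and it both delivers $e(H)-t(H)\le\frac{n^2}8+O(n)$ directly and, via the near-equality case, re-derives the bipartite structure $e(H)\approx n^2/4$, near-uniform degrees $\approx n/2$, and $e(X),e(Y)=o(n^2)$ from first principles. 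This is shorter and more self-contained than the paper's argument, and it is a real improvement for the lower bound and the first half of stability; it also yields an honest second proof (asymptotically) of \tref{german}.

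The genuine gap is in your last paragraph, and it is more serious than your closing remark suggests. To conclude $|\cG_3\vartriangle\cF_3|=o(n^2)$ you need the within-$X$ edges to contain a matching of size $\frac n4-o(n)$, which you try to extract from a triangle count. But your bound ``each within-part edge spans $\sim n/2$ triangles'' only holds for edges whose \emph{both} endpoints have degree $\frac n2\pm o(n)$ with almost all neighbours in the opposite class, and a priori up to $o(n)\cdot n=o(n^2)$ within-part edges are incident to the $o(n)$ exceptional vertices. Your $X_0/Y_0$ argument shows that $\frac n2-o(n)$ vertices of $X$ are each covered by \emph{some} within-$X$ edge, but a ``good'' vertex could be covered only by edges to bad vertices, and you have no bound on how many such vertices there are. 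The parenthetical fix you sketch (``only boundedly many such vertices exist for each degree threshold'') does not by itself close this: at each threshold you only get $o(n)$ vertices, and summing contributions still produces $o(n^2)$, not $o(n)$, stray edges. What is actually needed is an argument that a good vertex covered only by good-to-bad within-$X$ edges would have many cut edges with $\ge 2$ triangles through them (or no triangle at all), contradicting the slack bound $\sum_{\text{cut }e}(m_e-1)=o(n^2)$; this can be made to work but requires a careful double count that you have not supplied. To be fair, the paper's own proof of \clref{matching} elides the same difficulty (it bounds only ``good-good'' within-part edges by $\frac n4+o(n)$, but covers vertices using \emph{all} within-part edges), so this is a place where both arguments need tightening rather than a defect peculiar to your route.
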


\section{Bounds on ${\rm sat}(n,k)$ and ${\rm wsat}(n,k)$}
\label{sec:bounds}

In this section, we prove \tref{constr} and \tref{lower}. We start
our investigations with an easy lemma stating that we can always
assume the empty set and $[n]$ belong to the family
$\cF$.

\begin{lem}
\label{lem:triv} If $2 \le k \le n$, then there exists a weakly (strongly) 
saturating $k$-Sperner family $\cF \subseteq
2^{[n]}$ of
minimum size such that $\emptyset$ and $[n]$ belong to $\cF$.
\end{lem}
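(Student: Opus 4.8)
The plan is to prove the claim by a two-part "pushing" argument: first show that $\emptyset$ can be added to (or is already in) a minimum saturating family without increasing its size, and then symmetrically handle $[n]$.

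\medskip

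\textbf{Step 1: Getting $\emptyset$ into the family.} Let $\cF$ be a weakly (resp. strongly) saturating $k$-Sperner family of minimum size. If $\emptyset \in \cF$ we are done with this half, so assume $\emptyset \notin \cF$. Since $\cF$ is weakly saturating, there are $k$ distinct sets $F_1 \subset F_2 \subset \dots \subset F_k$ in $\cF$ forming a $(k+1)$-chain together with $\emptyset$; in particular $\cF$ contains a $k$-chain whose smallest element is some set $F_1 \ne \emptyset$. The idea is to \emph{replace} $F_1$ by $\emptyset$, i.e.\ set $\cF' := (\cF \setminus \{F_1\}) \cup \{\emptyset\}$. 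This keeps the size unchanged. I then need to check (a) $\cF'$ is still $k$-Sperner, and (b) $\cF'$ is still weakly saturating (and, in the strong case, that no set can be added). For (a): any $(k+1)$-chain in $\cF'$ using $\emptyset$ would give, after deleting $\emptyset$ and re-inserting it at the bottom, a structure that together with the (possibly) lost $F_1$ would have produced a $(k+1)$-chain in $\cF$ — one argues that replacing the bottom set of a chain by the even smaller set $\emptyset$ cannot create a longer chain than already existed through $F_1$. For (b): a set $S$ that was covered by a $(k+1)$-chain through $F_1$ in $\cF$ is still covered, because $\emptyset \subseteq F_1 \subseteq S$ (if $F_1$ was below $S$ in that chain) or because $S$'s witnessing chain did not use $F_1$ at all; the only delicate case is $S = F_1$ itself, which must now be shown to be coverable by a $(k+1)$-chain in $\cF'$, and this uses that $F_1$ sat at the bottom of an existing $k$-chain.

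\medskip

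\textbf{Step 2: Getting $[n]$ into the family, and combining.} This is entirely dual to Step 1: apply the same replacement argument with the complementation map $A \mapsto [n] \setminus A$, which is an order-reversing bijection of $2^{[n]}$ turning "$k$-Sperner" into "$k$-Sperner", "weakly saturating" into "weakly saturating", and "strongly saturating" into "strongly saturating". So from the family produced in Step 1 (which contains $\emptyset$), replace a suitable maximal-chain-top element by $[n]$; one must check that this second replacement does not eject $\emptyset$, which is immediate since $\emptyset \ne [n]$ and the element being replaced is some $F_k \ne [n]$ sitting at the top of a $k$-chain. The condition $k \le n$ guarantees $2^{[n]}$ actually contains a $(k+1)$-chain, so the relevant witnessing chains exist; the condition $2 \le k$ is what makes "a $k$-chain with smallest element $F_1$" a nontrivial constraint (for $k=1$ an antichain need not contain $\emptyset$, e.g.\ $\binom{[n]}{1}$, so the lemma genuinely needs $k \ge 2$, and indeed a minimum $1$-Sperner family is all of $2^{[n]}$ wait — no, for $k=1$ weakly saturating means every non-member set is comparable to a member, and $\{\emptyset\}$ itself is not $1$-saturating; one checks the hypothesis $2 \le k$ is used precisely here).

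\medskip

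\textbf{Main obstacle.} The routine part is the size bookkeeping; the real work is verifying that the single-set replacement $\cF \leadsto (\cF\setminus\{F_1\})\cup\{\emptyset\}$ preserves \emph{both} being $k$-Sperner \emph{and} being saturating simultaneously — in particular handling the set $S=F_1$ that newly needs a covering chain, and, in the strongly-saturating case, re-checking maximality (that one still cannot add any set). I expect the cleanest way to organize this is to argue that whenever $\emptyset \notin \cF$, the bottom element of \emph{every} maximal chain in the poset $\cF$ can be shrunk to $\emptyset$, or more slickly, to show directly that if $\cF$ is saturating and $\emptyset \notin \cF$ then $\cF$ is not of minimum size, by exhibiting a strictly smaller saturating family — but since the statement only asks for a minimum family \emph{containing} $\emptyset$ and $[n]$, the equal-size replacement above suffices and avoids having to produce a strict decrease.
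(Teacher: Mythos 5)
Your proposed replacement swaps a \emph{single} set $F_1$ (the bottom of one witnessing chain for $\emptyset$) for $\emptyset$, setting $\cF' = (\cF\setminus\{F_1\})\cup\{\emptyset\}$, whereas the paper removes \emph{all} minimal sets $\cF_m$ of $\cF$ at once, taking $(\cF\setminus\cF_m)\cup\{\emptyset\}$. This difference is not cosmetic: your single-set version does not preserve the $k$-Sperner property in the strong case. Concretely, take $n=k=2$ and $\cF=\{\{1\},\{2\},\{1,2\}\}$, which is strongly saturating $2$-Sperner with $\emptyset\notin\cF$. Taking $F_1=\{1\}$ gives $\cF'=\{\emptyset,\{2\},\{1,2\}\}$, which contains the $3$-chain $\emptyset\subset\{2\}\subset\{1,2\}$ and so is not $2$-Sperner. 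Your heuristic in part (a) --- that any long chain through $\emptyset$ in $\cF'$ ``together with the lost $F_1$'' yields a long chain in $\cF$ --- is exactly what fails here: the new chain $\emptyset\subset\{2\}\subset\{1,2\}$ has nothing to do with the removed $\{1\}$, since $\{1\}$ is incomparable with $\{2\}$. The correct fix, which the paper uses, is to delete every minimal set simultaneously: then any $(k+1)$-chain in $\cF'$ through $\emptyset$ has second element $G_2$ which is not minimal in $\cF$, so a proper subset $H\in\cF$ of $G_2$ can be prepended to contradict the $k$-Sperner property of $\cF$. Note also that the paper's move may strictly shrink the family, and the conclusion that a minimum family exists containing $\emptyset$ then comes from minimality of $\cF$, not from equality of sizes.

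You do gesture at the right idea in your closing paragraph, where you suggest shrinking ``the bottom element of every maximal chain'' to $\emptyset$; pursued to its conclusion this is precisely the paper's $\cF\setminus\cF_m\cup\{\emptyset\}$. But as written, the body of your argument commits to the single-replacement version, which is wrong for $k$-Sperner preservation. Your Step 2 (dualizing via complementation to handle $[n]$) is fine and matches the paper's ``completely analogous'' remark. So: right target and right dualization, but the core replacement step needs to delete all of $\cF_m$, not one member of it.
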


\begin{proof}
Let $\cF$ be of minimum size with $\emptyset \notin \cF$  and let $\cF_m$  denote the minimal
sets in $\cF$. Then $\cF \setminus \cF_m \cup \{\emptyset\}$ is weakly (strongly) saturating
$k$-Sperner and its size is at most the size of $\cF$. The case of $[n]$ is
completely analogous.
\end{proof}

\begin{proof}[Proof of \tref{constr}] First we give a construction that shows that sat$(6,6) \le 30=\frac{15}{16}2^{6-1}$. We enumerate the sets according to their size:
\begin{itemize}
\item
$\emptyset$,
\item
four singletons: $\{3\},\{4\},\{5\},\{6\}$,
\item
six pairs: $\{1,2\}, \{1,3\}, \{1,4\}, \{2,3\}, \{2,4\}, \{5,6\}$,
\item
eight triples: $\{1,2,5\}, \{1,2,6\}, \{3,4,5\}, \{3,4,6\}, \{1,3,5\},
\{1,4,5\}, \{2,3,6\}, \{2,4,6\}$,
\item
six 4-tuples:
$\{3,4,5,6\},\{2,3,4,5\},\{1,3,4,6\},\{1,2,3,4\},\{1,2,5,6\},\{1,2,3,5\}$,
\item
four 5-tuples: $\{1,2,3,4,6\}, \{1,2,4,5,6\}, \{1,3,4,5,6\},
\{2,3,4,5,6\}$
\item
and $\{1,2,3,4,5,6\}$.
\end{itemize}
To see that these sets indeed form a strongly saturating 6-Sperner family, 
note that $\emptyset$, the singletons and $\{1,2\}$ form a strongly
saturating 2-Sperner family and so do $\{1,2,3,4,5,6\}$, the 5-tuples and
$\{3,4,5,6\}$. The remaining pairs together with $\{1,3,5\},
\{1,4,5\}, \{2,3,6\}, \{2,4,6\}$ form a saturating antichain as
described in \sref{flat}. Now the remaining sets form a family
isomorphic to the complements of the members of the previous family and is therefore saturating
antichain. As these four families are disjoint and lie ``one below the
other'', their union is saturating 6-Sperner. 

The following lemma finishes the proof of \tref{constr} \textbf{(i)}.
\begin{lem}
\label{lem:ind} For any integers $k,n$ such that $3\le k \le n$ we have
\[
{\text sat}(n,k) \le 2 {\text sat}(n-1,k-1).
\]
\end{lem}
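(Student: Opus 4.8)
The plan is to take a strongly saturating $(k-1)$-Sperner family $\cF' \subseteq 2^{[n-1]}$ of minimum size and build from it a strongly saturating $k$-Sperner family $\cF \subseteq 2^{[n]}$ of size $2|\cF'|$ by a product-type construction. The natural candidate is
\[
\cF := \bigl(\cF' \times \{\emptyset\}\bigr) \cup \bigl(\cF' \times \{n\}\bigr) = \{F' : F' \in \cF'\} \cup \{F' \cup \{n\} : F' \in \cF'\},
\]
viewing each subset of $[n-1]$ as a subset of $[n]$ not containing $n$. Clearly $|\cF| = 2|\cF'|$ since the two halves are disjoint (one consists of sets avoiding $n$, the other of sets containing $n$). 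It remains to verify that $\cF$ is $k$-Sperner (Property 1) and weakly saturating $k$-Sperner (Property 2).

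\textbf{Property 1 ($k$-Sperner).} Suppose $\cF$ contains a $(k+1)$-chain $G_1 \subset G_2 \subset \dots \subset G_{k+1}$. Write $G_i = F_i \cup (\{n\} \cap G_i)$ with $F_i = G_i \cap [n-1] \in \cF'$. Since in a chain the element $n$ can be "switched on" at most once, there is an index $j$ such that $n \notin G_i$ for $i \le j$ and $n \in G_i$ for $i > j$; moreover the $F_i$ still form a chain $F_1 \subseteq F_2 \subseteq \dots \subseteq F_{k+1}$ in $\cF'$, and the inclusions are strict except possibly at the single step $i = j \to j+1$ (where adding $n$ might account for the strictness). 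Hence among $F_1,\dots,F_{k+1}$ we get at least $k$ distinct members of $\cF'$ forming a $k$-chain, contradicting that $\cF'$ is $(k-1)$-Sperner. So $\cF$ is $k$-Sperner.

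\textbf{Property 2 (weak saturation).} Let $S \subseteq [n]$ with $S \notin \cF$. Write $S' = S \cap [n-1]$. We must produce a $(k+1)$-chain through $S$ using $k$ members of $\cF$; we split into cases according to whether $n \in S$ and whether $S' \in \cF'$. If $S' \notin \cF'$, then since $\cF'$ weakly saturates $(k-1)$-Sperner there is a $k$-chain $F_1 \subset \dots \subset F_{k-1}$ of members of $\cF'$ with $S'$ inserted into it; pushing this chain into the appropriate half of $\cF$ (the $\{\emptyset\}$-half if $n \notin S$, the $\{n\}$-half if $n \in S$, adjoining $\{n\}$ to each set) and then using that the two halves sit "one below the other" — every set of the first half is contained in every set of the second half that extends it — we can prepend or append one more set from the other half to reach a $(k+1)$-chain through $S$. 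The key point here is that $\emptyset \in \cF'$ and $[n-1] \in \cF'$, which we may assume by \lref{triv}: this guarantees that $\emptyset \in \cF$ (bottom of the lower half) and $[n] = [n-1]\cup\{n\} \in \cF$ (top of the upper half) are always available to extend a $k$-chain on the appropriate side. If $S' \in \cF'$ but $S \notin \cF$, this can only happen when $n \in S$ and $S' \in \cF'$ yet $S' \cup \{n\}$ was... no: $S' \cup \{n\} \in \cF$ by construction, so actually $S' \in \cF'$ forces $S \in \cF$ in both cases $n \in S$ and $n \notin S$, so this case is vacuous. Thus Property 2 holds.

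\textbf{Main obstacle.} The routine bookkeeping is in Property 2: one must check that a $k$-chain witnessing weak saturation for $S'$ in $2^{[n-1]}$ lifts to a $(k+1)$-chain witnessing it for $S$ in $2^{[n]}$, and that exactly one extra link is gained — never more (else $k$-Sperner could fail) and never fewer (else $S$ would not be saturated). The crux is that the two copies of $\cF'$ are "stacked": $A \subseteq B$ whenever $A$ is in the lower copy, $B$ in the upper copy, and $A \subseteq B$ as sets; combined with $\emptyset,[n-1] \in \cF'$ this provides the one guaranteed extra element for any inserted chain, on whichever side $S$ lies. I expect no conceptual difficulty, only care in enumerating the cases $n \in S$ vs. $n \notin S$ and ensuring the produced chain has exactly $k$ members of $\cF$ distinct from $S$.
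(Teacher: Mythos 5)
Your proposal is correct and matches the paper's proof essentially step for step: the same product construction $\cF' \times \{\emptyset,\{n\}\}$, the same projection argument for the $k$-Sperner property, and the same case split on $n \in S$ vs.\ $n \notin S$ with $\emptyset$ and $[n]$ (available by \lref{triv}) supplying the extra link. The only thing left slightly vague in your write-up—exactly how the $(k-1)$-chain with $S$ inserted gains one more element—is handled in the paper just as you outline: append $[n]$ when $n \notin S$, and prepend $\emptyset$ after adjoining $\{n\}$ to every $F_i$ when $n \in S$.
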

\begin{proof} Let $\cF \subseteq 2^{[n-1]}$ be a strongly saturating 
$(k-1)$-Sperner family of minimum size such that (by \lref{triv}) 
$\emptyset,[n-1] \in \cF$. Then the family
$\cF'=\cF\times \{\emptyset,\{n\}\}=\cF \cup \{F \cup \{n\}:F \in
\cF\}$ is a strongly saturating $k$-Sperner subfamily of $2^{[n]}$.
Indeed, if there exists a $(k+1)$-chain $F'_1 \subset ... \subset
F'_{k+1}$ in $\cF'$, then at least $k$ out of the sets $F'_i \cap
[n-1]$ would be distinct and form a $k$-chain in $\cF$. This
contradiction shows that $\cF'$ is $k$-Sperner. To prove the
saturating property of $\cF'$ let us consider a set $G \in 2^{[n]}
\setminus \cF'$. By definition, we know that $G \cap [n-1] \notin
\cF$ holds and thus by the saturating property of $\cF$ there exists
$k-1$ sets $F_1,...,F_{k-1}$ in $\cF$ together with which $G \cap
[n-1]$ forms a $k$-chain. If $n \notin G$, then the $F_i$'s, $G$ and
$[n]$ form a $(k+1)$-chain in $\cF'$, while if $n \in G$, then 
$\emptyset,G$ and the $\{n\} \cup F_i$'s form a $(k+1)$-chain in
$\cF'$ (not necessarily in this order).
\end{proof}

To prove \textbf{(ii)} we first give a general construction. Let us write
$\cF_0=\{\emptyset\}$ and $\cF_{k}=\{[k]\}$. Furthermore, for any $1 \le
l \le k-1$ let $\cF_l \subseteq \binom{[k]}{l}$ be a family
satisfying $\nabla(\cF_l)=\binom{[k]}{l+1}$ and
$\Delta(\cF_l)=\binom{[k]}{l-1}$. Then define the family $\cF
\subseteq 2^{[n]}$ as follows:
\[
\cF=\bigcup_{l=0}^{k} \cF_l \times \{\emptyset, [n] \setminus
[k]\}=\bigcup_{l=0}^{k}\cF_l \cup \left\{F \cup ([n] \setminus [k]):
F \in \bigcup_{l=0}^{k}\cF_l\right\}.
\]
We claim that $\cF$ is a weakly saturating $k$-Sperner family.
Indeed, let us consider a set $G \in 2^{[n]} \setminus \cF$ and
define $i=|G \cap [k]|$. Observe that, by the conditions on the
$\cF_l$'s, there exist sets $F_l \in \cF_l$ for every $0\le l \le
k$, $l\ne i$ such that the $F_l$'s together with $G \cap [k]$ form a
$(k+1)$-chain. Then the $F_l$'s for $l <i$, $G$ and the sets $F_l
\cup ([n]\setminus [k])$ for $l>i$ form a $(k+1)$-chain.

It is well known \cite{CHLL} that if $l=\Theta(k)$, then there exists a
family $\cF'_l \subseteq \binom{[k]}{l}$ such that
$\Delta(\cF_l')=\binom{[k]}{l-1}$ and $|\cF'_l|=\Theta(\frac{\log
k}{k}\binom{k}{l})$ holds. Thus the size of the family $\cF_l=\cF'_l
\cup \overline{\cF'_{n-l}}$ is of the same order of magnitude and
satisfies $\Delta(\cF'_l)=\binom{[k]}{l-1},
\nabla(\cF'_l)=\binom{[k]}{l+1}$.  Use the general construction 
with $\cF_l$'s as above provided 
$k/4 \le l \le 3k/4$ and $\cF_l=\binom{[k]}{l}$ otherwise to obtain 
the family $\cF$. Then the size of $\cF$ is $2\sum_{i=0}^k|\cF_i|=O(\frac{\log k}{k}2^k)$.
\end{proof}

\vskip 0.5truecm

\begin{proof}[Proof of \tref{lower}] Let $\cF\subseteq 2^{[n]}$ be a
weakly saturating $k$-Sperner family and consider any set
$G \in 2^{[n]}\setminus \cF$. By definition, there exist $k$ distinct
sets $F_1,...,F_k \in \cF$ such that
$F_1 \subset ...\subset F_i \subset S \subset F_{i+1} \subset ...\subset F_k$
holds. Thus $G$ is a set from the interval $I_{F_i,F_{i+1}}=\{S: F_i \subseteq S \subseteq F_{i+1}$
which has size at most $2^{n-k+2}$ as $|F_{i+1}\setminus F_i| \le n-k+2$ holds by
the existence of the other $F_j$'s. We obtain that $2^{[n]}$ can be
covered by intervals of size at most $2^{n-k+2}$ and thus we have
\[
\frac{|\cF|^2}{2}2^{n-k+2} \ge \binom{|\cF|}{2}2^{n-k+2} \ge 2^n.
\]
Now \textbf{(i)} follows by rearranging.

To prove \textbf{(ii)} let us partition the intervals
$I_{F_i,F_{i+1}}$ in the proof of \textbf{(i)} according to the
$F_i$'s. The intervals belonging to the same $F_i$ may cover at most
the sets $\{S \supseteq F_i: |S \setminus F_i| \le c+1\}$. As the
number of these sets is at most $\sum_{i=0}^{c+1}\binom{k+c}{i} \le
(k+c)^{c+1}$, we obtain the inequality $|\cF|(k+c)^{c+1} \ge
2^{k+c}$ and we are done by rearranging.
\end{proof}

\vskip 0.3truecm

\tref{lower} 
\textbf{(ii)} with $c=0$ shows $wsat(k,k)=\Omega(2^k/k)$. It is one 
of the most important questions of the theory of covering codes 
whether there exist families $\cF_l \subseteq \binom{[n]}{l}$ as in the 
general construction with size $O(\binom{n}{l}/k)$. If the answer is 
positive, then one would obtain a weakly saturating $k$-Sperner family 
with size $O(2^k/k)$ via the general construction thus
$wsat(k,k)=\Theta(2^k/k)$ would follow.

\vskip 0.5truecm

Let us finish this section with some remarks on strongly saturating
$k$-Sperner families in the case when $n$ is large compared to $k$.
A family $\cF \subseteq 2^{[n]}$ is called \textit{non-separating}
if there exist $x,y \in [n]$ such that for all $F \in \cF$ we have
$x \in F \Leftrightarrow y \in F$. The family $\cF$ is separating if
it is not non-separating. Let us call a strongly saturating
$k$-Sperner family $\cF \subseteq 2^{[n]}$ \textit{duplicable} if
there exists $x \in [n]$ such that the family $\cF' \subseteq
2^{[n+1]}$ defined as $\cF':=\{F \in \cF: x \notin F\} \cup \{F \cup
\{n+1\}:x \in F\}$ is strongly saturating $k$-Sperner. Finally, a
family $\cF$ is \textit{primitive strongly saturating $k$-Sperner}
if it is separating and duplicable.

Clearly, if $2^{|\cF|} < n$, then $\cF$ is non-separating. As, by
the product construction defined in the Introduction, we know that
sat$(n,k) \le 2^{k-1}$, we obtain that any strongly saturating
$k$-Sperner family of minimum size is non-separating provided
$2^{2^{k-1}}<n$. 
Let $\cF \subseteq 2^{[n]}$ be such a family and $x,y \in [n]$ be the
elements showing the non-separating property of $\cF$. Then it is
easy to verify that the family $\cF^* \subseteq 2^{[n] \setminus
\{y\}}$ defined as $\cF^*=\{F \cap ([n] \setminus\{y\}): F \in \cF\}$
is strongly saturating $k$-Sperner and we have $|\cF|=|\cF^*|$.

Let $\cF \subseteq 2^{[n]}$ be a non-separating strongly saturating
$k$-Sperner family. We would like to prove that there is only one way to 
duplicate such a family. Formally, we claim that there do not exist 
$x,y,u,v \in
[n]$ and $F' \in \cF$ such that $x \in F \Leftrightarrow y \in F$ and $u \in F
\Leftrightarrow v \in F$ holds for all $F \in \cF$ but
exactly one of $x$ and $u$ is contained in $F'$. Indeed, if that is not the
case,  then
there would exist a set $S \subseteq [n] \setminus \{x,y,u,v\}$ such
that $S_1=S \cup \{x,u\} \notin \cF$ but at least one of $S \cup \{x,y\}$ and 
$S \cup \{u,v\}$ belongs to $\cF$ which we denote by $S_2$. Therefore there would exist sets
$F_1,...,F_k \in \cF$ that together with $S_1$ form a $(k+1)$-chain.
As the family $\cF$ does not separate $x$ and $y$ nor $u$ and $v$, all
the $F_i$'s contain either all four of $x,y,u,v$ or none of them.
But then the $F_i$'s and $S_2$ form a $(k+1)$-chain as well which
contradicts the $k$-Sperner property of $\cF$.

The above two statements yield the following lemma.

\begin{lem}
\label{lem:bign} Let $n$ and $k$ be positive integers such that
$2^{2^{k-1}} <n$ holds. Then we have
\[
\text{sat}(n,k)=\min_{n' \le 2^{2^{k-1}}}\left\{|\cF'|: \cF'
\subseteq 2^{[n']} \text{is primitive strongly saturating
$k$-Sperner}\right\}.
\]
Moreover, for any extremal family $\cF \subseteq 2^{[n]}$, there
exist integers $x \le n' \le 2^{2^{k-1}}$ and a primitive family
$\cF' \subseteq 2^{[n']}$ such that $\cF=\{F' \in \cF': x \notin
\cF'\} \cup \{F' \cup ([n] \setminus [n']): x \in F' \in \cF'\}$.
\end{lem}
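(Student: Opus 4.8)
The plan is to deduce \lref{bign} from the two structural observations that immediately precede it in the text, treating them as established. I would first set up the notation: fix $n$ and $k$ with $2^{2^{k-1}}<n$, and let $\cF\subseteq 2^{[n]}$ be a strongly saturating $k$-Sperner family of minimum size. By the product construction we have $|\cF|\le 2^{k-1}$, hence $2^{|\cF|}\le 2^{2^{k-1}}<n$, so by the first observation $\cF$ is non-separating: there are $x,y\in[n]$ with $x\in F\Leftrightarrow y\in F$ for all $F\in\cF$. Iterating the contraction $\cF\mapsto\cF^*=\{F\cap([n]\setminus\{y\}):F\in\cF\}$, which preserves strong $k$-Sperner saturation and the cardinality, I obtain a strongly saturating $k$-Sperner family of the same size on a ground set of size $n'$ where $n'$ is the first value $\le 2^{2^{k-1}}$ at which the family becomes separating; crucially the second observation (``there is only one way to duplicate'') guarantees that this contracted family is in fact \emph{primitive}, i.e. separating \emph{and} duplicable, because the original $\cF$ was itself obtained from it by a single duplication step and any further nontrivial non-separating pair would have to agree with the duplicated coordinate. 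This shows the left-hand side $\text{sat}(n,k)$ is at least the minimum on the right.

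For the reverse inequality, I would start from any primitive strongly saturating $k$-Sperner family $\cF'\subseteq 2^{[n']}$ with $n'\le 2^{2^{k-1}}$ and repeatedly apply the duplication operation (legitimate by primitivity and by the uniqueness statement, which ensures each intermediate family is again duplicable along the ``same'' coordinate class) until the ground set has size $n$; each step preserves strong $k$-Sperner saturation and does not change the cardinality, so we get $\text{sat}(n,k)\le|\cF'|$. Taking the minimum over all such $\cF'$ gives $\text{sat}(n,k)\le\min_{n'\le 2^{2^{k-1}}}\{|\cF'|:\cF'\text{ primitive}\}$, and combined with the previous paragraph we get equality. The ``moreover'' part is extracted from the same argument: tracking the single duplication coordinate $x$ through the contraction/expansion, the extremal $\cF$ is exactly the image of the primitive family $\cF'$ under splitting coordinate $x$ off into the block $[n]\setminus[n']$, which is the displayed formula.

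I expect the main obstacle to be the bookkeeping in the ``only one way to duplicate'' step: one must check carefully that after contracting to the minimal separating ground set, the family is duplicable (not merely separating), and that the duplication coordinate is well-defined, i.e. that the equivalence classes of coordinates under the relation ``$x\in F\Leftrightarrow y\in F$ for all $F\in\cF$'' interact correctly with the primitivity condition — this is precisely what the long paragraph before the lemma establishes, so the work is to invoke it cleanly rather than to reprove it. A secondary subtlety is verifying that the contraction operation and the duplication operation are mutually inverse in the appropriate sense and that both preserve the minimum-size property, which is routine but needs to be stated. I would keep the argument short by phrasing everything in terms of the two quoted structural facts and the product-construction bound, so that the proof of \lref{bign} is essentially a two-paragraph assembly.
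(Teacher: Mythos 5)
Your overall plan---contract an extremal $\cF$ down to a primitive core for the lower bound and the ``moreover'' clause, and blow a minimum primitive family back up for the upper bound---is exactly what the paper's ``the above two statements yield the following lemma'' has in mind, and the contraction half of your argument is sound: the final contracted family $\cF^{(n')}$ is separating by choice of $n'$, it is duplicable because a single duplication recovers $\cF^{(n'+1)}$ (which is strongly saturating, being an iterated contraction of $\cF$), and the ``only one way to duplicate'' observation guarantees a single non-separated block, which is what makes the displayed formula in the ``moreover'' part correct rather than having to allow several independent duplicated coordinates.

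The gap is in the reverse direction, where you write that iterated duplication is ``legitimate by primitivity and by the uniqueness statement, which ensures each intermediate family is again duplicable along the `same' coordinate class.'' Primitivity gives you only the \emph{first} duplication $\cF'\mapsto\cF^{(2)}$. The uniqueness observation constrains \emph{which} coordinate could possibly be duplicated (it must lie in the unique nontrivial block), but it says nothing about whether duplicating $\cF^{(2)}$ again actually yields a strongly saturating family---that is precisely what ``duplicable'' would assert and it is not a formal consequence of the uniqueness statement. What you actually need, and should verify directly, is that the $m$-fold blow-up $\cF^{(m)}$ (replace the duplication coordinate $x$ of $\cF'$ by a block $B$ of size $m$) is strongly saturating $k$-Sperner. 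The $k$-Sperner part and the saturating property for sets $G$ with $G\cap B\in\{\emptyset,B\}$ follow by contracting to $\cF'$; the remaining case $\emptyset\ne G\cap B\ne B$ is the one doing real work, and it is handled by passing to $\tilde G=(G\setminus B)\cup\{x\}$, which meets $\{x,n'+1\}$ in exactly one element, invoking the saturation of the single duplicate $\cF^{(2)}$ (available by primitivity) to get a $(k+1)$-chain through $\tilde G$, and lifting that chain back to $\cF^{(m)}$: the lower links are disjoint from $B$ and sit below $G\setminus B$, the upper links contain $B$ and sit above $G\cup B$. Spelling this out closes the only genuine hole in your write-up.
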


\section{Saturating flat antichains}
\label{sec:flat}

In this section we consider saturating flat antichains. Let us start
with an easy lemma that gives a necessary and sufficient condition
for a family $\cF$ to be a saturating flat antichain.

\begin{lem}
\label{lem:shadow} A family $\cF=\cF_l\cup \cF_{l+1}$ with $\cF_l
\subset \binom{[n]}{l}, \cF_{l+1} \subset \binom{[n]}{l+1}$ is a
saturating antichain if and only if we have
$\Delta(\cF_{l+1})=\binom{[n]}{l} \setminus \cF_l$ and
$\nabla(\binom{[n]}{l} \setminus \cF_l)=\cF_{l+1}$.
\end{lem}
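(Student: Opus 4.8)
The plan is to prove the two implications separately, using throughout the reformulation that $\cF$ is a saturating antichain exactly when it is a maximal antichain in $2^{[n]}$, i.e.\ exactly when every $S\in 2^{[n]}\setminus\cF$ is comparable to some member of $\cF$, and observing that, since every member of $\cF$ has size $l$ or $l+1$, such a comparability is severely restricted by $|S|$. For the direction ``$\Leftarrow$'' I would assume the two displayed identities. First, $\cF$ is an antichain: an $F\in\cF_l$ with $F\subset G$ for some $G\in\cF_{l+1}$ would lie in $\Delta(\cF_{l+1})=\binom{[n]}{l}\setminus\cF_l$, contradicting $F\in\cF_l$. Then I would check saturation by a short case analysis for $S\notin\cF$: if $|S|=l$ then $S\in\binom{[n]}{l}\setminus\cF_l=\Delta(\cF_{l+1})$, so $S$ lies below a member of $\cF_{l+1}$; if $|S|=l+1$ then $S\notin\cF_{l+1}$ together with the second identity forces some $l$-subset $T$ of $S$ to lie in $\cF_l$, so $S$ lies above $T$; if $|S|<l$, pick any $l$-set $T\supseteq S$, and either $T\in\cF_l$ or $T\in\binom{[n]}{l}\setminus\cF_l=\Delta(\cF_{l+1})$, so in either case $S\subseteq T$ lies below a member of $\cF$; and if $|S|>l+1$, pick any $(l+1)$-set $G\subseteq S$, and either $G\in\cF_{l+1}$ or (by the case $|S|=l+1$ applied to $G$) $G$ has an $l$-subset in $\cF_l$, so $S$ lies above a member of $\cF$.

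For ``$\Rightarrow$'' I would assume $\cF$ is a saturating antichain. The antichain property gives $\Delta(\cF_{l+1})\cap\cF_l=\emptyset$, i.e.\ $\Delta(\cF_{l+1})\subseteq\binom{[n]}{l}\setminus\cF_l$, and likewise that every $G\in\cF_{l+1}$ has all of its $l$-subsets outside $\cF_l$, which gives the inclusion $\cF_{l+1}\subseteq\nabla(\binom{[n]}{l}\setminus\cF_l)$ (one half of the second identity). For the reverse inclusion in the first identity, take $T\in\binom{[n]}{l}\setminus\cF_l$; then $T\notin\cF$, so by maximality $T$ is comparable to some $M\in\cF$, and since $M\neq T$ and $|M|\in\{l,l+1\}$ this forces $|M|=l+1$ and $T\subset M$, whence $T\in\Delta(\cF_{l+1})$. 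For the reverse inclusion in the second identity, take an $(l+1)$-set $G$ all of whose $l$-subsets avoid $\cF_l$; if $G\notin\cF_{l+1}$ then $G\notin\cF$, so by maximality $G$ is comparable to some $M\in\cF$ with $M\neq G$ and $|M|\in\{l,l+1\}$, which forces $|M|=l$ and $M\subset G$ with $M\in\cF_l$, contradicting that every $l$-subset of $G$ avoids $\cF_l$. Hence $G\in\cF_{l+1}$, and both identities follow.

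The argument is essentially a bookkeeping of levels and I do not expect a real obstacle; the two points that deserve care are (i) in the ``$\Leftarrow$'' direction, the reduction of the comparability test for sets of size $\neq l,l+1$ back to the two middle levels — this is where each shadow identity is used a second time, and it is cleanest to isolate the sub-claim that every set below an $l$-set of $\binom{[n]}{l}\setminus\cF_l$ also lies below a member of $\cF_{l+1}$ (and dually for $(l+1)$-sets); and (ii) in the ``$\Rightarrow$'' direction, recognising that $\cF_{l+1}$ is forced to be \emph{exactly} the family of $(l+1)$-sets having no $l$-subset in $\cF_l$ — one inclusion from the antichain property, the other from maximality — which is precisely what the second displayed identity records. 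One may assume $l+1\le n$, the remaining tiny cases being trivial to check directly.
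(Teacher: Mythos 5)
Your proof is correct and follows essentially the same route as the paper: antichain property gives two inclusions, maximality gives the other two; conversely the identities give the antichain property and a level-by-level saturation check. The one place where you are actually more careful than the paper is the ``$\Leftarrow$'' direction, where the paper only verifies saturation for sets of size $l$ and $l+1$ and leaves the sizes $<l$ and $>l+1$ implicit; your reduction (extend a small $S$ to an $l$-set, restrict a large $S$ to an $(l+1)$-set, then invoke the identity once more) is the right way to close that small gap.
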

\begin{proof}
Assume first that $\cF$ is a saturating antichain. The inclusions
$\Delta(\cF_{l+1})\subseteq \binom{[n]}{l} \setminus \cF_l$ and
$\nabla(\binom{[n]}{l} \setminus \cF_l)\supseteq \cF_{l+1}$ follow
trivially from $\cF$ being an antichain. The other inclusions follow
from the saturating property of $\cF$. Indeed, if a $G \in
\binom{[n]}{l}$ is not in $\cF$, then the only reason for this is
that there should be an $F \in \cF_{l+1}$ containing $G$, similarly
if no $l$-subsets of a $(l+1)$-set $G$ belong to $\cF_l$, then $G$
can be added to $\cF_{l+1}$.

Now let us assume that $\Delta(\cF_{l+1})=\binom{[n]}{l} \setminus
\cF_l$ and $\nabla(\binom{[n]}{l} \setminus \cF_l)=\cF_{l+1}$ hold.
These equations clearly imply that $\cF$ is an antichain. Also,
$\cF$ is saturating as any $l$-set is either in $\cF$ or is
contained in a set in $\cF_{l+1}$ and any $(l+1)$-set is either in
$\cF$ or it is not in $\nabla(\binom{[n]}{l} \setminus \cF_l)$ and
therefore contains an $l$-set from $\cF$.
\end{proof}

\vskip 0.3truecm

Before we start to prove \tref{23} we need to introduce our two main
tools.

\begin{thm} [Ruzsa-Szemer\'edi, \cite{RSz}]
\label{thm:triangle} Let $G_n$ be a graph on $n$ vertices such that
the number of triangles in $G_n$ is $o(n^3)$. Then there exists a
subset $E$ of $E(G_n)$ of size $o(n^2)$ such that if we remove all
the edges in $E$ from $G_n$, then the resulting graph is triangle
free.
\end{thm}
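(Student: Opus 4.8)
The plan is to derive this from Szemer\'edi's Regularity Lemma together with the standard triangle counting (embedding) lemma; no elementary proof is known. It is convenient to unwind the $o(\cdot)$ formulation and instead prove the equivalent statement: for every $\delta>0$ there is a $\gamma>0$ such that any graph $G$ on $n$ vertices containing fewer than $\gamma n^3$ triangles can be made triangle-free by deleting at most $\delta n^2$ edges (finitely many small values of $n$ being absorbed into the constants).

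Fix $\delta>0$ and choose auxiliary parameters $d$ and $\epsilon$ with $\epsilon\ll d\ll\delta$, to be pinned down at the end. Apply the Regularity Lemma to $G$ with parameter $\epsilon$ and lower bound $\lceil 1/\epsilon\rceil$ on the number of parts, obtaining an equitable partition $V(G)=V_1\cup\dots\cup V_k$ with $1/\epsilon\le k\le M(\epsilon)$ in which all but at most $\epsilon k^2$ of the pairs $(V_i,V_j)$ are $\epsilon$-regular. Now perform the cleaning step: delete from $G$ every edge that (a) lies inside some part $V_i$, or (b) joins a non-$\epsilon$-regular pair, or (c) joins a pair of density less than $d$. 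The number of edges of type (a) is at most $n^2/(2k)\le\epsilon n^2$, of type (b) at most $\epsilon k^2\cdot(n/k)^2=\epsilon n^2$, and of type (c) at most $\binom k2 d(n/k)^2\le\tfrac d2 n^2$, so the total is at most $(2\epsilon+\tfrac d2)n^2$, which is at most $\delta n^2$ once $\epsilon,d$ are small enough.

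It remains to show that the cleaned graph $G'$ is triangle-free, provided $\gamma$ was chosen suitably. Suppose $G'$ contains a triangle on vertices $x,y,z$. Since edges inside parts were deleted, $x,y,z$ lie in three distinct parts $V_i,V_j,V_k$; since edges of types (b) and (c) were also deleted, each of the three pairs among $V_i,V_j,V_k$ is $\epsilon$-regular with density at least $d$. The triangle counting lemma then yields at least $(1-2\epsilon)(d-\epsilon)^3|V_i||V_j||V_k|$ triangles with one vertex in each of these three parts; since $|V_i|,|V_j|,|V_k|\ge(1-\epsilon)n/k\ge(1-\epsilon)n/M(\epsilon)$, this count is at least $\gamma n^3$ for some constant $\gamma=\gamma(\delta)>0$. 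Hence if $G$ has fewer than $\gamma n^3$ triangles then $G'$ is necessarily triangle-free, and we have deleted at most $\delta n^2$ edges, completing the proof.

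The real content is the two imported tools. The Regularity Lemma is used as a black box; the triangle counting lemma needs a one-paragraph proof, which is where the only subtlety lies: for a ``typical'' vertex $a\in V_i$ --- and all but at most $2\epsilon|V_i|$ vertices are typical by $\epsilon$-regularity of the pairs $(V_i,V_j)$ and $(V_i,V_k)$ --- the neighborhoods $N(a)\cap V_j$ and $N(a)\cap V_k$ each have size at least $(d-\epsilon)n/k\ge\epsilon n/k$, so $\epsilon$-regularity of $(V_j,V_k)$ forces at least $(d-\epsilon)$ times $|N(a)\cap V_j|\cdot|N(a)\cap V_k|$ edges between them, i.e.\ that many triangles through $a$; summing over the typical $a$ gives the stated bound. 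Beyond that, the only task is to order the choice of constants $\delta\to d\to\epsilon\to M(\epsilon)\to\gamma$ consistently, which is the main (though routine) obstacle to getting the quantifiers right.
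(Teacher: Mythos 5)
The paper does not prove this statement: it is imported verbatim as the Ruzsa--Szemer\'edi triangle removal lemma, cited to \cite{RSz} and used as a black box in the proof of Theorem~\ref{thm:23}. So there is no ``paper's proof'' to compare against. Your argument is the standard derivation from Szemer\'edi's Regularity Lemma (clean the partition by discarding internal, irregular, and low-density edges, then invoke the counting lemma to show a surviving triangle implies $\Omega_{\delta}(n^3)$ triangles), and it is correct. Two small points worth tightening if you were to write it out in full: the edge-count for type (a) should really be bounded by $k\binom{\lceil n/k\rceil}{2}$, which is $\le \epsilon n^2$ only for $n$ large relative to $M(\epsilon)$ (fine, since small $n$ are absorbed into the constants as you note); and in the counting lemma you should say explicitly that ``typical'' requires $d\ge 2\epsilon$ so that $|N(a)\cap V_j|\ge (d-\epsilon)|V_j|\ge \epsilon|V_j|$, which is what licenses the final application of $\epsilon$-regularity to the pair $(V_j,V_k)$ --- your choice $\epsilon\ll d$ covers this, but the inequality is the crux of the step.
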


\begin{thm} [Erd\H os-Simonovits, \cite{E,Sim}]
\label{thm:stab} Let $G_n$ be a triangle free graph on $n$ vertices
with $|E(G_n)|=(\frac{1}{4}-o(1))n^2$. Then there exists a bipartition
$V(G_n)=X \cup Y$ with $||X|-|Y||\le 1$ such that $|E(G_n)
\vartriangle E(K_{X,Y})|=o(n^2)$ holds, where $K_{X,Y}$ is the
complete bipartite graph with parts $X$ and $Y$.
\end{thm}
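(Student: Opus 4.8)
The plan is to prove \tref{stab} by a short double-counting argument exploiting the only structural fact available: in a triangle-free graph, adjacent vertices have disjoint neighbourhoods, so $d(u)+d(v)\le n$ for every edge $uv$. Summing over the edges gives $\sum_v d(v)^2=\sum_{uv\in E}(d(u)+d(v))\le n\,e(G_n)$, while Cauchy--Schwarz gives $\sum_v d(v)^2\ge (2e(G_n))^2/n$. Together these recover Tur\'an's bound $e(G_n)\le n^2/4$ and, since we assume $e(G_n)\ge(1/4-\epsilon)n^2$, they show both inequalities are within $\epsilon n^3$ of equality: explicitly $n\,e(G_n)-\sum_v d(v)^2\le \frac{e(G_n)}{n}(n^2-4e(G_n))\le \epsilon n^3$. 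From the Cauchy--Schwarz slack $\sum_v(d(v)-2e(G_n)/n)^2\le \epsilon n^3$ I deduce that all but $o(n)$ vertices have degree $(1/2\pm o(1))n$ (call these \emph{typical}), and from $\sum_{uv\in E}(n-d(u)-d(v))\le \epsilon n^3$ that all but $o(n^2)$ edges $uv$ satisfy $d(u)+d(v)\ge n-o(n)$ (call these \emph{good}).

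Since typical vertices and good edges each account for all but $o(n^2)$ edges, I pick one edge $v_0u_0$ that is good and whose endpoints are both typical (it exists because $e(G_n)\gg o(n^2)$). Set $A=N(v_0)$ and $B=N(u_0)$. Triangle-freeness forces $A\cap B=\emptyset$ (a common neighbour of $v_0,u_0$ would complete a triangle) and makes each of $A,B$ an independent set. As $v_0,u_0$ are typical, $|A|,|B|=(1/2\pm o(1))n$; as $v_0u_0$ is good, $|A|+|B|=|A\cup B|=d(v_0)+d(u_0)\ge n-o(n)$, so the leftover set $C=V(G_n)\setminus(A\cup B)$ has $|C|=o(n)$.

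Now take the bipartition $X=A$, $Y=B\cup C$. Edges of $G_n$ lying inside a part must be incident to $C$ (since $A$ and $B$ are independent), so there are at most $|C|\cdot n=o(n^2)$ of them; and the number of non-edges of $G_n$ lying inside $K_{X,Y}$ is at most $|X|\,|Y|-e(G_n)+o(n^2)\le n^2/4-(1/4-\epsilon)n^2+o(n^2)=o(n^2)$, using $|X|\,|Y|=|A|(n-|A|)\le n^2/4$. Hence $|E(G_n)\vartriangle E(K_{X,Y})|=o(n^2)$. Finally $||X|-|Y||=o(n)$ because $|X|=|A|=(1/2\pm o(1))n$, so moving $o(n)$ vertices from the larger part to the smaller one changes the symmetric difference with the (updated) complete bipartite graph by at most $o(n)\cdot n=o(n^2)$ and achieves $||X|-|Y||\le 1$, finishing the proof.

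The main thing to get right is the bookkeeping of the $o(\cdot)$ terms rather than any genuine difficulty: one should fix explicit powers, e.g.\ declare a vertex \emph{typical} if $|d(v)-2e(G_n)/n|\le \epsilon^{1/4}n$ and an edge \emph{good} if $n-d(u)-d(v)\le \epsilon^{1/2}n$, verify via the two slack bounds that at most $\epsilon^{1/2}n$ vertices and at most $2\epsilon^{1/2}n^2$ edges fail to be typical/good, and then check that every error term above is $O(\epsilon^{1/4})\,n^2$; letting $\epsilon\to 0$ yields the claimed $o(n^2)$. No heavy machinery (in particular not the Ruzsa--Szemer\'edi removal lemma \tref{triangle}) is needed here --- everything rests on the edge inequality $d(u)+d(v)\le n$ together with Cauchy--Schwarz.
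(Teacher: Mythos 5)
Your proof is correct, but note that the paper does not prove this theorem at all: it is stated as a cited black box (Erd\H os \cite{E} and Simonovits \cite{Sim}), so there is no ``paper proof'' to compare against. What you have given is a clean, self-contained elementary proof of the triangle-free special case. The chain of estimates checks out: the identity $\sum_v d(v)^2=\sum_{uv\in E}(d(u)+d(v))$, the triangle-free bound $d(u)+d(v)\le n$, and Cauchy--Schwarz pinch $\sum_v d(v)^2$ between $4e^2/n$ and $ne$; both slacks are correctly bounded by $\tfrac{e}{n}(n^2-4e)\le\epsilon n^3$; Markov then yields $o(n)$ atypical vertices and $o(n^2)$ non-good edges; a good edge $v_0u_0$ with typical endpoints exists because $e(G_n)\gg\epsilon^{1/2}n^2$; disjointness and independence of $A=N(v_0)$, $B=N(u_0)$ are exactly triangle-freeness; and the final accounting of the symmetric difference together with the $o(n)$-vertex rebalancing is routine. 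One could also observe that $v_0\in B$ and $u_0\in A$, so the two anchor vertices sit correctly in the two parts and nothing is lost.

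The genuine difference from the cited source is scope and machinery. Erd\H os and Simonovits proved a general stability theorem (for $K_{r+1}$-free graphs and, more broadly, for forbidden families of fixed chromatic number) using graph-theoretic arguments that do not reduce to the edge inequality you exploit; your argument is specific to $r=2$, where triangle-freeness gives the particularly strong local constraint $d(u)+d(v)\le n$ and $N(v_0)$, $N(u_0)$ are automatically independent and disjoint. What your route buys is a short, quantitative, fully elementary derivation with explicit $\epsilon$-dependence ($O(\epsilon^{1/4})n^2$ change suffices), needing neither regularity nor removal lemmas. What it gives up is generality: it does not extend as written to larger cliques, whereas the cited theorem does. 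Since the paper only invokes the triangle case, your argument is a perfectly adequate and arguably more transparent substitute for the citation.
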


\begin{proof}[Proof of \tref{23}] \lref{shadow} shows that the construction
of the theorem is indeed a saturating antichain hence the upper bound of the theorem.

To prove the lower bound of the theorem let $\cF=\cF_2\cup \cF_3$
with $\cF_2 \subset \binom{[n]}{2}, \cF_3 \subset \binom{[n]}{3}$ be
a saturating antichain. Sets in $\cF_2$ and $\cF_3$ will be called
\textit{$\cF$-edges} and \textit{$\cF$-triples}, while sets in
$\binom{[n]}{2} \setminus \cF_2$ and $\binom{[n]}{3} \setminus
\cF_3$ will be called \textit{$\cF$-non-edges} and
\textit{$\cF$-non-triples}. Consider the graph $H$ of
$\cF$-non-edges, i.e. $V(H)=[n]$ and $E(H)=\binom{[n]}{2} \setminus
\cF_2$. By \lref{shadow}, we know that the triangles in $H$ are the
$\cF$-triples. As the number of $\cF$-triples is $O(n^2)=o(n^3)$ it
follows by \tref{triangle} that $H$ can be made triangle free
removing a set $E'$ of edges (i.e. $\cF$-non-edges of $\cF$) with
size $o(n^2)$. By Tur\'an's theorem \cite{T} we have that
$|E(H)|\le(\frac{1}{4}+o(1))n^2$ and thus $|\cF_2| =
\binom{n}{2}-|E(H)| \ge \binom{n}{2}
-(\frac{1}{4}+o(1))n^2=(\frac{1}{4}-o(1))n^2$. 

Let $\cF'_3 \subset \cF_3$ a maximal subfamily of the $\cF$-triples
such that for any $F_1,F_2 \in \cF'_3$ we have $|F_1 \cap F_2| \le
1$. As every $\cF$-non-edge in $E'$ is contained in at most one
$\cF$-triple in $\cF'_3$, we obtain $|\cF'_3|=o(n^2)$. By
\lref{shadow} we know that every $\cF$-non-edge  is contained in at
least one $\cF$-triple. But an $\cF$-triple covers three
$\cF$-non-edges, moreover, any $\cF$-triple in $\cF_3 \setminus
\cF'_3$ covers at most two $\cF$-non-edges that are not covered by
any $\cF$-triple in $\cF'_3$. Thus we obtain the inequality
$\binom{n}{2}-|\cF_2| \le 3|\cF'_3|+2|\cF_3 \setminus \cF'_3| \le
2|\cF_3|+o(n^2)$.

Let us define $\alpha=\alpha(n)$ by writing
$|\cF_2|=(\frac{1}{4}+\alpha)n^2$. By what we have so far, we know
that $\liminf \alpha \ge 0$. Using the inequality above we obtain
\[
|\cF|=|\cF_2|+|\cF_3| \ge
\left(\frac{1}{4}+\alpha\right)n^2+\frac{\left(\frac{1}{4}-\alpha\right)}{2}n^2-o(n^2)=\left(\frac{3}{8}+\frac{\alpha}{2}-o(1)\right)n^2.
\]
As $\liminf \alpha \ge 0$, this completes the proof of the lower
bound. Moreover, we obtain that if $\cF\subset \binom{[n]}{2} \cup
\binom{[n]}{3}$ is a saturating antichain with
$|\cF|=(\frac{3}{8}+o(1))n^2$, then we have 
$|\cF_2|=(\frac{1}{4}-o(1))n^2$ and $|\cF_3|=(\frac{1}{8}-o(1))n^2$.

All what remains is to prove the stability of the extremal family.
Note that a saturating antichain $\cF$ is clearly determined by
$\cF$-non-edges. In the case of the conjectured extremal family
these are the edges of $K_{A\cup B, C}$ and the matching $M$. Let
$\cF$ be a saturating antichain of size $(\frac{3}{8}-o(1))n^2$.
Then by what we have proved so far, we know that the graph $H$ of
the $\cF$-non-edges is of size $(\frac{1}{4}-o(1))n^2$ and is
triangle free after removing $o(n^2)$ edges. Thus, by 
\tref{stab}, after changing at most $o(n^2)$ edges in $H$ we obtain
the bipartite Tur\'an graph $K_{X,Y}$ with $||X|-|Y|| \le 1$. Let us 
put a maximal matching $M$ into any of
the two classes, say to $X$, and define the tripartition of $[n]$ as
$C$ to be the vertices not incident to $M$ and $A$ and $B$ to
contain different vertices from all edges of $M$ and $\cG$ to be the
extremal family built on this tripartition. No matter how we chose $M$, the
$\cG_2$ part of the resulting family $\cG$ will satisfy $|\cG_2
\vartriangle \cF_2|=o(n^2)$. Note that, as $X \subseteq C$ or $Y
\subseteq C$, the bipartition $A \cup B, C$ is already known up to
one vertex possibly moving from one part to the other and thus the
graph $K_{A\cup B, C}$ is known up to a change of at most $n-1$
edges.

Let $H'$ be the graph with $V(H')=[n]$ and $E(H')=E(K_{X, Y}) \cap
(\binom{[n]}{2}\setminus \cF_2)$. By the above we know that
$|E(H')|=(\frac{1}{4}-o(1))n^2$ and thus with an exception of $o(n)$
vertices every vertex has degree $(\frac{1}{2}-o(1))n$.

\begin{cl}
\label{clm:matching} Either $X$ or $Y$ contains a matching $M$ that
consists of $(\frac{1}{4}-o(1))n$ $\cF$-non-edges.
\end{cl}
\begin{proof}
We only consider vertices with degree $(\frac{1}{2}-o(1))n$ in $H'$. Note
that any $\cF$-non-edge between two such vertices in the same vertex
class defines $(\frac{1}{2}+o(1))n$ triangles in $H$ and thus, by
\lref{shadow}, that many $\cF$-triples. Also, these $\cF$-triples
are distinct, therefore there can be at most $(\frac{1}{4}+o(1))n$
such $\cF$-non-edges as we have already proved that
$|\cF_3|=(\frac{1}{8}+o(1))n^2$. Observe that all but $o(n)$
vertices in either $X$ or $Y$ are contained in at least one
$\cF$-non-edge with the other vertex in the same vertex class of
$H'$. Indeed, otherwise there would be an edge in $H'$ between such an $x
\in X$ and such a $y \in Y$ (if not, then $\Omega(n^2)$ edges would be missing from
$H'$).
And since any edge of $H'$ is an
$\cF$-non-edge, by \lref{shadow}, it has to be contained in an
$\cF$-triple all three 2-subsets of which are $\cF$-non-edges.

Now the claim follows as $(\frac{1}{4}-o(1))n$ edges can cover at
least one vertex class with the exception of $o(n)$ vertices if and only if
those edges with $o(n)$ exceptions form a matching in
one of the classes.
\end{proof}
We extend the matching $M$ given by \clref{matching} to a maximal
matching in $X$ and define the partition $A$, $B$, $C$ accordingly.
By the reasoning of \clref{matching} there are
$(\frac{1}{8}-o(1))n^2$ $\cF$-triples containing some $\cF$-non-edge
from $M$ and all these $\cF$-triples belong to $\cG$, too. As both
$|\cF_3|=(\frac{1}{8}-o(1))n^2$ and $\cG_3=(\frac{1}{8}-o(1))n^2$
hold, we obtain $|\cF_3 \vartriangle \cG_3|=o(n^2)$.
\end{proof}

%\begin{remark} The lower bound $|\cF_2| \ge (\frac{1}{4}-o(1))n^2$ can also be obtained with elementary arguments only. Let $\binom{[n]}{3} \setminus \cF_3 =:\overline{\cF_3}=\cF_3'\cup \cF_3''$ with
%$\cF_3'=\{F \in \overline{\cF_3}: \Delta(F) \subset \cF_2\}$ and $\cF_3''=\overline{\cF_3} \setminus \cF_3'$. Thus
%by the saturating property of $\cF$ we know that all $F \in \cF_3''$ contain one or two sets from $\cF_2$.
%Let us denote by $d(u)$ the number of sets in $\cF_2$ containing $u$. Via double counting we obtain the following three (in)equalities
%\begin{itemize}
%\item
%$|\cF_2|=\frac{1}{2}\sum_{u}d(u)$,
%\item
%$|\cF_3'|\le \frac{1}{3}\sum_u\binom{d(u)}{2}$,
%\item
%$|\cF_3''|=\frac{1}{2}\sum_u d(u)(n-1-d(u))$.
%\end{itemize}
%Putting these together, letting $x=\sum_u d(u)$ and using $\sum_u d^2(u) \ge \frac{x^2}{n}$ we obtain
%\[
%|\cF| \ge \frac{1}{2}\sum_u d(u)+\binom{n}{3}-\frac{1}{2}\sum_u d(u)(n-1-d(u))-\frac{1}{3}\sum_u\binom{d(u)}{2}=
%\]
%\[
%\frac{1}{3}\sum_u d^2(u)+\left(\frac{2}{3}-\frac{n-1}{2}\right)\sum_u d(u)+\binom{n}{3} \ge\frac{x^2}{3n}+\left(\frac{2}{3}-\frac{n-1}{2}\right)x
%+\binom{n}{3}.
%\]
%Writing $cn^2=x=\sum_u d(u)=2|\cF_2|$, the coefficient of $n^3$ in the above expression is $c^2/3-c/2+1/6$ which is positive if $c<1/2$.
%\end{remark}

In the remainder of the section we show how to generalize \tref{23}
to flat antichains with set sizes $l$ and $l+1$. Let us start by
defining the generalization of the construction of \cite{GHKLR}.

\begin{con}
\label{con:gen} Let us consider the partition $[n]=A\cup B\cup C$
with $|A|=|B|$ and let $\cM$ be a complete matching between $A$ and
$B$. Let $\cG_{l+1}=\{G \in \binom{[n]}{l+1}: \exists M \in \cM
\hskip 0.3truecm \text{with}\ M \subset G \hskip 0.3truecm
\text{and}\ G\setminus M \subset C\}$ and $\cG_l=\binom{[n]}{l}
\setminus \Delta(\cG_l)$. It is easy to see that the conditions of
\lref{shadow} hold and thus $\cG=\cG_l \cup \cG_{l+1}$ is a
saturating antichain.

The number of $(l+1)$-tuples in $\cG_{l+1}$ is
$|A|\binom{n-2|A|}{l-1}$ and the number of $l$-tuples not in $\cG_l$
is $|A|\binom{n-2|A|}{l-2}+2|A|\binom{n-2|A|}{l-1}$ thus we have
$\cG=|A|\binom{n-2|A|}{l-1}+\binom{n}{l}-(|A|\binom{n-2|A|}{l-2}+2|A|\binom{n-2|A|}{l-1})=
\binom{n}{l}-|A|\binom{n-2|A|+1}{l-1}$.
\end{con}

Observe that by replacing \tref{triangle} with the hypergraph
removal lemma \cite{G,NRS,RS}, we can use the argument of \tref{23}
to get lower bounds for the size of a saturating flat antichain
consisting only of $l$ and $(l+1)$-sets. Also, \cnref{gen} gives an
upper bound on the minimum size that such a family can have. In
order to be able to state the theorem we define $t_l$ to be the
Tur\'an-density of $K^l_{l+1}$ the complete $l$-uniform hypergraph
on $l+1$ vertices, i.e. if $ex(n,K^l_{l+1})$ denotes the most number
of edges that an $l$-uniform hypergraph on $n$ vertices can have
without containing a copy of $K^l_{l+1}$, then $t_l=\lim
ex(n,K^l_{l+1})/\binom{n}{l}$. Determining $t_l$ is one of the most
important open problems of extremal combinatorics and even the value
of $t_3$ is unknown. It is conjectured to be $5/9$ and the current
best upper bound is $\frac{3+\sqrt{17}}{12}$ \cite{CL}.

\begin{thm}
\label{thm:lflat}
\[
\left(1-\frac{l-1}{l}t_l-o(1)\right)\binom{n}{l} \le sat(n,l,l+1)
\le
\left(1-\frac{1}{2}\left(1-\frac{1}{l}\right)^{l-1}+o(1)\right)\binom{n}{l}.
\]
\end{thm}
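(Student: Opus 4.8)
The plan is to establish the two bounds of Theorem~\ref{thm:lflat} separately, mimicking the strategy of the proof of Theorem~\ref{thm:23}, with the hypergraph removal lemma \cite{G,NRS,RS} playing the role that Theorem~\ref{thm:triangle} played in the $l=2$ case. For the upper bound, take the family $\cG=\cG_l\cup\cG_{l+1}$ from Construction~\ref{con:gen} with $|A|=|B|=m$ for the optimal choice of $m$. By Lemma~\ref{lem:shadow} this $\cG$ is a saturating antichain, and its size is $\binom{n}{l}-m\binom{n-2m+1}{l-1}$. One then maximizes $m\binom{n-2m+1}{l-1}$ over $m$: writing $m=\beta n$ and using $\binom{n-2m+1}{l-1}=\frac{(n-2m)^{l-1}}{(l-1)!}(1+o(1))=\frac{n^{l-1}}{(l-1)!}(1-2\beta)^{l-1}(1+o(1))$, the quantity to maximize is (up to $(1+o(1))$) $\beta(1-2\beta)^{l-1}\binom{n}{l}\cdot\frac{l!}{(l-1)!}\cdot\frac{1}{n}\cdot n = l\,\beta(1-2\beta)^{l-1}\binom{n}{l}$. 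Calculus gives the maximum of $\beta(1-2\beta)^{l-1}$ at $\beta=\tfrac{1}{2l}$, where its value is $\tfrac{1}{2l}\left(1-\tfrac1l\right)^{l-1}$; multiplying by $l$ yields $\tfrac12\left(1-\tfrac1l\right)^{l-1}$, so $sat(n,l,l+1)\le\left(1-\tfrac12\left(1-\tfrac1l\right)^{l-1}+o(1)\right)\binom{n}{l}$. (Rounding $m$ to an integer only costs an $o(\binom{n}{l})$ term.)

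For the lower bound, let $\cF=\cF_l\cup\cF_{l+1}$ be a saturating flat antichain. Form the $l$-uniform hypergraph $H$ with vertex set $[n]$ and edge set $\binom{[n]}{l}\setminus\cF_l$ (the $\cF$-non-$l$-sets). By Lemma~\ref{lem:shadow}, the copies of $K^l_{l+1}$ in $H$ are exactly the sets in $\cF_{l+1}$: an $(l+1)$-set all of whose $l$-subsets are $\cF$-non-edges is precisely an $\cF$-triple (analogue). Since $|\cF_{l+1}|\le\binom{n}{l+1}$ is not automatically $o(n^{l+1})$ — here we must be a bit careful — we first note that the number of copies of $K^l_{l+1}$ in $H$ equals $|\cF_{l+1}|$, and by the counting argument below $|\cF_{l+1}|=O(\binom{n}{l})=o(n^{l+1})$ (indeed, by Lemma~\ref{lem:shadow} every $\cF$-non-$l$-set lies in at least one $\cF_{l+1}$-member, and conversely each member of $\cF_{l+1}$ contains $l+1$ of the at most $\binom{n}{l}$ non-$l$-sets, so $|\cF_{l+1}|\le\binom{n}{l}$). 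Thus the removal lemma applies: deleting a set $E'$ of $o(n^l)$ edges makes $H$ free of $K^l_{l+1}$, so $|E(H)|\le t_l\binom{n}{l}+o(n^l)$ by the definition of the Turán density, whence $|\cF_l|=\binom{n}{l}-|E(H)|\ge(1-t_l-o(1))\binom{n}{l}$.

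To control $|\cF_{l+1}|$ from below, repeat the $\cF'_3$ argument: pick a maximal subfamily $\cF'_{l+1}\subseteq\cF_{l+1}$ with $|F_1\cap F_2|\le l-1$ for distinct members — equivalently, no two members share an $l$-subset — so the members of $\cF'_{l+1}$ are edge-disjoint as $(l+1)$-sets. Each $\cF$-non-edge in $E'$ lies in at most one member of $\cF'_{l+1}$, giving $|\cF'_{l+1}|=o(n^l)$. Every $\cF$-non-$l$-set lies in some member of $\cF_{l+1}$; a member of $\cF_{l+1}$ covers $l+1$ non-$l$-sets, and a member of $\cF_{l+1}\setminus\cF'_{l+1}$ covers at most $l$ non-$l$-sets not already covered by $\cF'_{l+1}$ (by maximality it shares an $l$-subset with some member of $\cF'_{l+1}$). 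Hence $\binom{n}{l}-|\cF_l|\le(l+1)|\cF'_{l+1}|+l\,|\cF_{l+1}\setminus\cF'_{l+1}|\le l\,|\cF_{l+1}|+o(n^l)$, i.e. $|\cF_{l+1}|\ge\frac{1}{l}(|E(H)|-o(n^l))$. Writing $|E(H)|=(t_l-\delta)\binom{n}{l}$ with $\delta\ge -o(1)$, we get
\[
|\cF|=|\cF_l|+|\cF_{l+1}|\ge\left(1-(t_l-\delta)\right)\binom{n}{l}+\frac{t_l-\delta}{l}\binom{n}{l}-o\!\left(\binom{n}{l}\right)=\left(1-\frac{l-1}{l}(t_l-\delta)-o(1)\right)\binom{n}{l},
\]
and since $\delta\ge -o(1)$ and $\frac{l-1}{l}>0$ this is at least $\left(1-\frac{l-1}{l}t_l-o(1)\right)\binom{n}{l}$, as claimed.

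The main obstacle I expect is the applicability of the removal lemma, which requires the number of copies of $K^l_{l+1}$ in $H$ to be $o(n^{l+1})$ rather than merely $o(n^{l+1})$ trivially; the bookkeeping above shows $|\cF_{l+1}|\le\binom{n}{l}$, which is more than enough, but one must phrase it carefully so that the $o(\cdot)$ error from the removal lemma is indeed $o(n^l)$ and not just $o(n^{l+1})$. A secondary point to get right is that the counting inequality relating $\binom{n}{l}-|\cF_l|$ to $|\cF_{l+1}|$ uses $l+1$ and $l$ (not $3$ and $2$), which is why the optimal constant involves $\frac{l-1}{l}$; the Erdős–Simonovits stability step has no clean hypergraph analogue with known constants, so — unlike Theorem~\ref{thm:23} — we do not attempt a stability statement here.
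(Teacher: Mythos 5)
Your plan is exactly the one the paper has in mind: the paper says only that the lower bound comes from replacing \tref{triangle} by the hypergraph removal lemma in the argument of \tref{23}, and that the upper bound follows from \cnref{gen} with $|A|=|B|=n/(2l)$. You fill in both correctly; the optimization of $\beta(1-2\beta)^{l-1}$, the observation that copies of $K^l_{l+1}$ in $H$ are exactly the members of $\cF_{l+1}$, and the covering inequality $\binom{n}{l}-|\cF_l|\le l\,|\cF_{l+1}|+o(n^l)$ that produces the factor $\frac{l-1}{l}$ all match the intended argument.

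One step as written is not justified: the claim $|\cF_{l+1}|\le\binom{n}{l}$, which you use to invoke the removal lemma. Your reasoning that each $\cF_{l+1}$-member contains $l+1$ of the at most $\binom{n}{l}$ non-$l$-sets does not give this, since a single non-$l$-set can lie in up to $n-l$ members of $\cF_{l+1}$; that count only yields the trivial $|\cF_{l+1}|=O\bigl(\binom{n}{l+1}\bigr)$. In fact the claim fails for some saturating flat antichains, e.g.\ $\cF=\binom{[n]}{l+1}$ with $\cF_l=\emptyset$, which is saturating and has $\binom{n}{l+1}>\binom{n}{l}$ sets of size $l+1$. The repair is easy and is the same tacit step used in the proof of \tref{23}: since $sat(n,l,l+1)$ is a minimum, one may take $\cF$ of minimum size, so the upper bound already gives $|\cF_{l+1}|\le|\cF|=O\bigl(\binom{n}{l}\bigr)=o(n^{l+1})$; alternatively, if $|\cF_{l+1}|\ge\binom{n}{l}$ then $|\cF|\ge\binom{n}{l}$ and the claimed lower bound is trivial. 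With that fix the removal lemma applies and the rest of your argument goes through.
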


\begin{proof}
The upper bound follows from \cnref{gen} by setting
$|A|=|B|=\frac{1}{2l}n$ and $|C|=\frac{l-1}{l}n$.
\end{proof}

Note that \tref{lflat} would not give the correct asymptotics even
in the case $l=3$ and with the assumption that Tur\'an's conjecture true.

\section{Final remarks and open problems}

In this section we enumerate the open problems in this topic that we
find the most important and interesting.
\begin{itemize}
\item
What is the correct order of magnitude of $wsat(k,k)$ and $sat(k,k)$? Do they
coincide? Can one find a sequence of families showing $sat(k,k)=o(2^k)$?
\item
We feel that there do not exist too many primitive strongly saturating
$k$-Sperner families. A better understanding of these families could help in
proving \cjref{big} via \lref{bign}.
\item
Try to close the gap between the lower and upper bounds on the minimum size of 
a saturating flat antichain for $l \ge 3$. Give any general lower bound which
does not use the Tur\'an density of hypergraphs.
\end{itemize}

\end{document}